\DeclareMathOperator{\Aut}{Aut}
\DeclareMathOperator{\Inn}{Inn}
\DeclareMathOperator{\Out}{Out}
\DeclareMathOperator{\Isom}{Isom}
\DeclareMathOperator{\Spec}{Spec}
\DeclareMathOperator{\Gal}{Gal}
\DeclareMathOperator{\GL}{GL}
\DeclareMathOperator{\ord}{ord}
\newcommand{\et}{\mathrm{\acute{e}t}}
\newcommand{\ab}{\mathrm{ab}}
\newcommand{\AbsGalGrp}[1]{G_{#1}}
\newcommand{\GeneSubgrp}[1]{\left\langle #1\right\rangle}
\newcommand{\GeneSubgrpTop}[1]{\overline{\GeneSubgrp{#1}}}
\newcommand{\DerivSubgrpTop}[1]{\overline{\left[#1,#1\right]}}
\newcommand{\DerivSeries}[2]{#1^{[#2]}}
\newcommand{\FinStepSolvQuo}[2]{#1^{#2}}
\newcommand{\FinStepSolvQuoGeom}[2]{#1^{(#2)}}
\newcommand{\Abelian}[1]{#1^{\ab}}
\newcommand{\tame}{\mathrm{tame}}
\newcommand{\EtFundGrpWithPt}[2]{\pi_{1}^{\et}(#1,#2)}
\newcommand{\EtFundGrp}[1]{\pi_{1}^{\et}(#1)}
\newcommand{\EtFundGrpTame}[1]{\pi_{1}^{\tame}(#1)}
\newcommand{\EtFundGrpTameWithPt}[2]{\pi_{1}^{\tame}(#1,#2)}
\newcommand{\EtFundGrpGeom}[1]{\Delta_{#1}}
\newcommand{\EtFundGrpPi}[1]{\Pi_{#1}}
\newcommand{\Centralizer}[2]{\mathrm{C}_{#1}\left(#2\right)}
\newcommand{\CenterSubgrp}[1]{\mathrm{Z}\left(#1\right)}
\newcommand{\UnivCov}[1]{\widetilde{#1}}
\numberwithin{equation}{section}
\theoremstyle{plain}
	\newtheorem{theorem}{Theorem}[section]
	\newtheorem*{theorem*}{Theorem}
	\newtheorem{proposition}[theorem]{Proposition}
	\newtheorem{lemma}[theorem]{Lemma}
	\newtheorem{corollary}[theorem]{Corollary}
	\newtheorem{Itheorem}{Theorem}
	\newtheorem{Ilemma}{Lemma}
	\newtheorem{Icorollary}{Corollary}
\theoremstyle{definition}
	\newtheorem{definition}[theorem]{Definition}
	\newtheorem{remark}[theorem]{Remark}
\title[Center-freeness of finite-step solvable profinite groups arising from anabelian geometry]{Center-freeness of finite-step solvable groups arising from anabelian geometry}
\date{Version of \today}
\author[N.~Yamaguchi]{Naganori~Yamaguchi}
\address{Tokyo Denki University, 5 Senju-Asahi-cho, Adachi-ku, Tokyo 120-8551, Japan}
\email{n.yamaguchi@mail.dendai.ac.jp}
\subjclass[2020]{Primary 14H30; Secondary 14F35, 20E18}
\keywords{\texorpdfstring{\'etale}{etale} fundamental group; anabelian geometry; hyperbolic curves; center-freeness; solvable quotients; Grothendieck conjecture}
\thanks{
This work was supported by JSPS KAKENHI Grant Numbers 23KJ0881.}
\begin{document}

\begin{abstract}
	Anabelian geometry suggests that, for suitably geometric objects, their \texorpdfstring{\'etale}{etale} fundamental groups determine the geometric objects up to isomorphism.
	From a group-theoretic viewpoint, this philosophy requires rigidity properties, which often follow from the center-freeness of the associated \texorpdfstring{\'etale}{etale} fundamental groups.
	In fact, some profinite groups arising from anabelian geometry are center-free.
	For any integer \texorpdfstring{$m\geq 2$}{m>=2}, we investigate how such center-freeness behaves under passage to the maximal \texorpdfstring{$m$}{m}-step solvable quotients.
	In particular, we show that the maximal \texorpdfstring{$m$}{m}-step solvable quotients of the \texorpdfstring{\'etale}{etale} and tame fundamental groups of a hyperbolic curve over a separably closed field are torsion-free and center-free.
	Furthermore, we show that this implies the rigidity property of the \texorpdfstring{$m$}{m}-step solvable Grothendieck conjecture.
\end{abstract}

\maketitle

\tableofcontents

\section*{Introduction}
Let $G$ be a profinite group.
We define the \textit{topological derived series} of $G$ by setting
\begin{equation*}
	\DerivSeries{G}{0}
	\coloneq G,
	\qquad
	\DerivSeries{G}{m}
	\coloneq \DerivSubgrpTop{\DerivSeries{G}{m-1}}
	\qquad (m\geq 1).
\end{equation*}
We set $\FinStepSolvQuo{G}{m} \coloneq G/\DerivSeries{G}{m}$ and call it the \textit{maximal $m$-step solvable quotient} of $G$.
Consider the following property:
\medskip
\begin{quote}
	For any $m\in\mathbb{Z}_{\geq 2}$, the quotient $\FinStepSolvQuo{G}{m}$ is center-free.
\end{quote}
\medskip
Known examples of profinite groups that satisfy the property include:
\medskip
\begin{quote}
	\textbf{Free pro-$\Sigma$ groups:}
	Free pro-$\Sigma$ groups are center-free.
	Moreover, the maximal $2$-step solvable quotients of free pro-$\Sigma$ groups are also center-free (see, for instance,~\cite[Section~4]{chen2017arithmeticmonodromyactionsprometabelian}).
	We can generalize this result from the case $m=2$ to all $m\in \mathbb{Z}_{\geq 2}$ immediately.
\end{quote}
\medskip
\begin{quote}
	\textbf{Absolute Galois groups:}
	The absolute Galois groups of number fields and of $p$-adic local fields are center-free.
	Moreover, for any $m\in\mathbb{Z}_{\geq 2}$, their maximal $m$-step solvable quotients are also center-free (see~\cite[Proposition~1.1(ix) and Corollary~1.7]{MR4460166}).
	This is closely related to the $m$-step solvable analogue of the Neukirch--Uchida theorem; see~\cite{MR4460166} for details.
\end{quote}
\medskip

If $G$ is metabelian and center-free, then for any $m\in\mathbb{Z}_{\geq 2}$ the natural projection $G\to \FinStepSolvQuo{G}{m}$ is an isomorphism, and hence $\FinStepSolvQuo{G}{m}$ is center-free.
In general, however, even if $G$ is center-free, the quotient $\FinStepSolvQuo{G}{m}$ need not be center-free.
In fact, we can easily construct a counterexample as follows:

\medskip
\begin{quote}
	Let $D_{8}=\langle r,s \mid r^{4}=1,\ s^{2}=1,\ srs^{-1}=r^{-1}\rangle$ be the dihedral group of order $8$, and define $\mathrm{pr}_{G}:D_{8}\to \GL_{2}(\mathbb{F}_{3})$ by
	\begin{equation*}
		r\longmapsto
		\begin{pmatrix}
			0 & -1 \\
			1 & 0
		\end{pmatrix},
		\qquad
		s\longmapsto
		\begin{pmatrix}
			1 & 0  \\
			0 & -1
		\end{pmatrix}.
	\end{equation*}
	Then $G \coloneq (C_{3}\times C_{3})\rtimes_{\mathrm{pr}_{G}} D_{8}$ is center-free; however, the quotient $\FinStepSolvQuo{G}{2}\cong D_{8}$ is not center-free.
\end{quote}
\medskip

In this paper, we give a new example of a profinite group that satisfies the property.
Let $k$ be a field (of arbitrary characteristic) with separable closure $\overline{k}$, and let $X$ be a smooth curve over $k$.
Note that we always assume that smooth curves are geometrically connected.
Let $\Sigma$ denote a non-empty set of prime numbers.
We write
\begin{equation*}
	\EtFundGrpWithPt{X}{\ast}\qquad (\text{resp. }\EtFundGrpTameWithPt{X}{\ast})
\end{equation*}
for the \textit{\'etale fundamental group} (resp. \textit{tame fundamental group}) of $X$, where $\ast:\Spec(\Omega)\to X$ denotes a geometric point of $X$ and $\Omega$ denotes an algebraically closed field.
The fundamental group depends on the choice of base point only up to inner automorphisms, and therefore we omit the choice of base point below.

The first main theorem of this paper is the following:

\begin{Itheorem}[Theorem~\ref{mainthm_center_free_fund}]\label{ItheoremA}
	Assume that $X$ is hyperbolic, that $k=\overline{k}$, and that $\Sigma$ contains a prime number different from the characteristic of $k$.
	Then, for any $m\in\mathbb{Z}_{\geq 2}$, the maximal $m$-step solvable quotients of $\EtFundGrp{X}^{\Sigma}$ and $\EtFundGrpTame{X}^{\Sigma}$ are both torsion-free and center-free.
\end{Itheorem}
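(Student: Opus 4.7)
The plan is to proceed by induction on $m \geq 2$, treating $\EtFundGrp{X}^\Sigma$ and $\EtFundGrpTame{X}^\Sigma$ in parallel; I write $\Pi$ for either. The key exact sequence is
\begin{equation*}
1 \longrightarrow K_m \longrightarrow \FinStepSolvQuo{\Pi}{m} \longrightarrow \FinStepSolvQuo{\Pi}{m-1} \longrightarrow 1,
\qquad K_m \coloneq \DerivSeries{\Pi}{m-1}/\DerivSeries{\Pi}{m} \;=\; \Abelian{\bigl(\DerivSeries{\Pi}{m-1}\bigr)}.
\end{equation*}
Here $\DerivSeries{\Pi}{m-1}$ is the (pro-$\Sigma$) fundamental group of the maximal $(m-1)$-step solvable cover $X_\infty\to X$, presented as the inverse limit of finite Galois covers $Y_Q\to X$ whose Galois group $Q$ is a finite quotient of $\FinStepSolvQuo{\Pi}{m-1}$; correspondingly, $K_m=\varprojlim_Q \Abelian{\bigl(\EtFundGrp{Y_Q}^\Sigma\bigr)}$ (and the obvious tame analog), equipped with the natural $\FinStepSolvQuo{\Pi}{m-1}$-action by conjugation on the tower. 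I will reduce the torsion-freeness and center-freeness of $\FinStepSolvQuo{\Pi}{m}$ to two structural facts about $K_m$.

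\medskip

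\noindent\textbf{Torsion-freeness.} Each $Y_Q$ is again a hyperbolic curve over $\overline{k}$, since hyperbolicity is preserved under finite \'etale (and tame) covers; hence $\Abelian{\bigl(\EtFundGrp{Y_Q}^\Sigma\bigr)}$ is a torsion-free pro-$\Sigma$ abelian group—a direct sum of Tate modules $T_\ell\,\mathrm{Jac}(Y_Q)$ for $\ell\in\Sigma$ when $Y_Q$ is proper, and a pro-$\Sigma$-completed free abelian group on the first Betti number many generators when $Y_Q$ is affine (the assumption that $\Sigma$ contains a prime $\neq\mathrm{char}(k)$ ensures non-triviality). Since an inverse limit of torsion-free pro-$\Sigma$ abelian groups is torsion-free, $K_m$ is torsion-free; because an extension of a torsion-free profinite group by a torsion-free profinite group is again torsion-free (any torsion element maps to a torsion, hence trivial, element of the quotient, so lies in the torsion-free kernel), the inductive hypothesis on $\FinStepSolvQuo{\Pi}{m-1}$ then delivers torsion-freeness of $\FinStepSolvQuo{\Pi}{m}$.

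\medskip

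\noindent\textbf{Main obstacle: $K_m^{\FinStepSolvQuo{\Pi}{m-1}}=0$.} A non-zero invariant would project, for every $Q$, to a non-zero $Q$-invariant class in $\Abelian{\bigl(\EtFundGrp{Y_Q}^\Sigma\bigr)}$. Via the pullback--norm relations for Jacobians (with their affine counterparts accounting for boundary divisors), such $Q$-invariants are controlled by the image of $\Abelian{\Pi}$ up to contributions annihilated by $|Q|$. As $Q$ ranges over a cofinal family of finite quotients of $\FinStepSolvQuo{\Pi}{m-1}$, these orders acquire arbitrarily large $\ell$-adic valuations for some $\ell\in\Sigma$ with $\ell\neq\mathrm{char}(k)$, so the transition morphisms in the inverse system annihilate any purported invariant. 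Hyperbolicity enters crucially here, both to guarantee a rich tower of solvable covers and to supply a non-degenerate Weil-type pairing on the Tate modules that rules out the survival of invariants in the limit. This vanishing is the technical heart of the proof.

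\medskip

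\noindent\textbf{Deducing center-freeness.} For $m\geq 3$: given $g\in\CenterSubgrp{\FinStepSolvQuo{\Pi}{m}}$, its image in $\FinStepSolvQuo{\Pi}{m-1}$ is central, hence trivial by the inductive hypothesis, so $g\in K_m$ and then $g\in K_m^{\FinStepSolvQuo{\Pi}{m-1}}=0$. For the base case $m=2$, $\FinStepSolvQuo{\Pi}{1}=\Abelian{\Pi}$ is itself abelian, so the preceding reduction must be supplemented: any $g\in\CenterSubgrp{\FinStepSolvQuo{\Pi}{2}}$ acts trivially by conjugation on $K_2$, forcing $\bar g\in\Abelian{\Pi}$ into the kernel of the natural action $\Abelian{\Pi}\to\Aut(K_2)$. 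This action is faithful—again a consequence of hyperbolicity, since $K_2$ contains a pro-$\Sigma$ sub-module of positive rank on which $\Abelian{\Pi}$ acts essentially via its regular representation—so $\bar g=1$, reducing to $g\in K_2^{\Abelian{\Pi}}=0$. The argument runs verbatim for both $\EtFundGrp{X}^\Sigma$ and $\EtFundGrpTame{X}^\Sigma$.
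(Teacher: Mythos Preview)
Your strategic outline mirrors the paper's: both reduce center-freeness to (a) the vanishing of invariants $K_m^{\FinStepSolvQuo{\Pi}{m-1}}=0$, and (b) the containment $\CenterSubgrp{\FinStepSolvQuo{\Pi}{m}}\subset K_m$. Torsion-freeness via the extension argument is fine. But two steps in your write-up are genuine gaps.

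\textbf{The ``main obstacle'' paragraph is not a proof.} You assert that a non-zero invariant projects to a non-zero $Q$-invariant for \emph{every} $Q$; this is false for inverse limits in general, and the transition maps $\Abelian{N}\to\Abelian{N'}$ here are not injective. More importantly, even granting your description that $Q$-invariants are ``the image of $\Abelian{\Pi}$ up to $|Q|$-torsion,'' letting $|Q|\to\infty$ kills only the torsion part; you never explain why the $\Abelian{\Pi}$-contribution dies. The invocation of a Weil pairing is a red herring. The paper's clean mechanism is: the transfer shows that the \emph{natural} map $(\Abelian{N})^{G/N}\to\Abelian{\Pi}$ is injective (using torsion-freeness of $\Abelian{N}$), hence so is the limit map $K_m^{\FinStepSolvQuo{\Pi}{m-1}}\hookrightarrow\Abelian{\Pi}$; but for $m\geq 2$ this same natural map $K_m\to\Abelian{\Pi}$ factors through coinvariants and is zero (since $\Abelian{\Pi}\xrightarrow{\sim}\Abelian{(\FinStepSolvQuo{\Pi}{m-1})}$), whence $K_m^{\FinStepSolvQuo{\Pi}{m-1}}=0$.

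\textbf{The $m=2$ base case asks for more than you can easily supply.} You claim the action $\Abelian{\Pi}\to\Aut(K_2)$ is faithful, justified by ``$K_2$ contains a sub-module on which $\Abelian{\Pi}$ acts via its regular representation.'' For affine $X$ (free $\Pi$) this follows from Blanchfield--Lyndon, but for proper $X$ (surface groups) there is no obvious free $\mathbb{Z}_\Sigma[[\Abelian{\Pi}]]$-summand in $K_2$, and your sentence does not constitute an argument. You are using only that a central $g$ centralizes $K_2$, discarding the stronger information that $g$ centralizes \emph{every} open normal $N\supset K_2$. The paper exploits precisely this: for each such $N$, the conjugation action of the finite group $\FinStepSolvQuo{\Pi}{m}/N$ on $\Abelian{N}$ is faithful (an easy consequence of Riemann--Hurwitz plus Hochschild--Serre, comparing Euler characteristics), so $g\in N$ for all $N$, hence $g\in K_2$. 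This ab-faithfulness for finite quotients is both weaker than your claim and directly provable; it also handles all $m\geq 2$ uniformly, obviating your inductive split.
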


\begin{Icorollary}[Corollary~\ref{center_free_surface}]\label{IcorollaryA}
	For any $m\in\mathbb{Z}_{\geq 2}$, the maximal $m$-step solvable quotient of a pro-$\Sigma$ surface group of genus at least $2$ is torsion-free and center-free.
\end{Icorollary}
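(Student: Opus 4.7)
The plan is to realize every pro-$\Sigma$ surface group of genus $g \geq 2$ as the pro-$\Sigma$ \'etale fundamental group of a hyperbolic curve over an algebraically closed field, and then quote Theorem~\ref{ItheoremA} directly.

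Concretely, I would fix $g \geq 2$ and take $X$ to be any smooth projective curve of genus $g$ over $k = \mathbb{C}$. Since $X$ is proper of genus at least $2$, the Euler characteristic $2 - 2g$ is negative, so $X$ is hyperbolic in the sense used above. By the Riemann existence theorem (equivalently, SGA~1), $\EtFundGrp{X}$ is the profinite completion of the topological fundamental group $\pi_{1}(X(\mathbb{C}))$, which is precisely the discrete surface group of genus $g$; passing to the maximal pro-$\Sigma$ quotient then identifies $\EtFundGrp{X}^{\Sigma}$ with the pro-$\Sigma$ surface group of genus $g$ appearing in the statement of the corollary.

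Because the characteristic of $\mathbb{C}$ is $0$, every prime number is automatically different from it, so the hypotheses of Theorem~\ref{ItheoremA} are satisfied with $k = \overline{k} = \mathbb{C}$. Applying that theorem yields that $(\EtFundGrp{X}^{\Sigma})^{m}$ is torsion-free and center-free for every $m \in \mathbb{Z}_{\geq 2}$, which is exactly the assertion of the corollary. I do not foresee any genuine obstacle here: all of the real work has already been absorbed into Theorem~\ref{ItheoremA}, and this corollary is simply its specialization to smooth projective complex curves of genus at least $2$. The only point to verify carefully is that the comparison of $\EtFundGrp{X}$ with the profinite completion of $\pi_{1}(X(\mathbb{C}))$ respects the maximal pro-$\Sigma$ quotient, but this is standard.
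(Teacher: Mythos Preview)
Your proposal is correct and follows essentially the same approach as the paper: the paper's proof simply asserts that there exists a smooth proper curve over an algebraically closed field whose pro-$\Sigma$ \'etale fundamental group is the given pro-$\Sigma$ surface group, and then applies Theorem~\ref{mainthm_center_free_fund}. Your choice of $k=\mathbb{C}$ together with the Riemann existence theorem is a perfectly valid (and slightly more explicit) way to justify that existence step.
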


We say that a profinite group $G$ is \textit{slim} if the centralizer $\Centralizer{G}{H}$ of each open subgroup $H\subset G$ in $G$ is trivial (see~\cite[Definition 0.1]{MR2059759}).
Since slimness is stronger than center-freeness, it is natural to ask whether the center-freeness statement in Theorem~\ref{ItheoremA} can be strengthened to slimness.
At the time of writing, the author does not know whether these groups are slim in general (see Proposition~\ref{climproposition} for partial results toward slimness).
To the best of the author’s knowledge, slimness for the $m$-step solvable quotients is currently known only for free pro-$\Sigma$ groups, as proved in~\cite[Section~1.1]{MR4578639}.
However, the argument of~\cite[Proposition~1.1.1]{MR4578639} contains an error and does not go through as written.
In Proposition~\ref{freeprocenter}, we provide a corrected proof of~\cite[Proposition~1.1.1]{MR4578639}, and in Section~\ref{free_section} we give a proof of the slimness of the $m$-step solvable quotients of free pro-$\Sigma$ groups as follows:

\begin{Itheorem}[Theorem~\ref{thm_center-free_freegroup} and Corollary~\ref{cor_slim_free}]\label{IthmB}
	Let $\mathcal{F}$ be a (possibly infinitely generated) free pro-$\Sigma$ group with a free generating set $\mathcal{X}$.
	Let $m\in\mathbb{Z}_{\geq 2}$.
	Then, for any nonzero integer $n\in\mathbb{Z}$ and any $x\in \mathcal{X}$, we have
	\begin{equation*}
		\Centralizer{\FinStepSolvQuo{\mathcal{F}}{m}}{x^n}
		=\GeneSubgrpTop{x}.
	\end{equation*}
	In particular, the quotient $\FinStepSolvQuo{\mathcal{F}}{m}$ is slim if $\mathcal{F}\not\cong \mathbb{Z}_{\Sigma}$.
\end{Itheorem}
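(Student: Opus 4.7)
The plan is to proceed by induction on $m\geq 2$. The inclusion $\GeneSubgrpTop{x}\subseteq \Centralizer{\FinStepSolvQuo{\mathcal{F}}{m}}{x^n}$ is tautological, so I focus on the reverse inclusion. For the inductive step, I consider the canonical short exact sequence
\begin{equation*}
	1 \longrightarrow A_m \longrightarrow \FinStepSolvQuo{\mathcal{F}}{m} \xrightarrow{\ \pi\ } \FinStepSolvQuo{\mathcal{F}}{m-1} \longrightarrow 1,
	\qquad
	A_m \coloneq \DerivSeries{\mathcal{F}}{m-1}/\DerivSeries{\mathcal{F}}{m},
\end{equation*}
in which $A_m$ is abelian and carries the natural conjugation action of $\FinStepSolvQuo{\mathcal{F}}{m-1}$; equivalently, $A_m$ is a continuous module over $\Lambda_{m-1}\coloneq \mathbb{Z}_{\Sigma}[[\FinStepSolvQuo{\mathcal{F}}{m-1}]]$. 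Given $g\in \Centralizer{\FinStepSolvQuo{\mathcal{F}}{m}}{x^n}$, the inductive hypothesis applied to $\pi(g)\in \Centralizer{\FinStepSolvQuo{\mathcal{F}}{m-1}}{\pi(x)^n}$ yields $\pi(g)=\pi(x)^k$ for some $k\in\mathbb{Z}_{\Sigma}$; replacing $g$ by $g\cdot x^{-k}$ (which remains in the same centralizer), I reduce to $g\in A_m$, and the commutation relation $[g,x^n]=1$ becomes the module-theoretic equation $(\pi(x)^n-1)\cdot g=0$.

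The task thus reduces to showing that multiplication by $\pi(x)^n-1$ is injective on $A_m$. For the base case $m=2$, the pro-$\Sigma$ Magnus/Fox embedding exhibits $A_2$ as a submodule of the free $\Lambda_1$-module $\bigoplus_{y\in\mathcal{X}}\Lambda_1\cdot e_y$ (via the Fox derivatives $\partial/\partial y$); since $\Lambda_1\cong \mathbb{Z}_{\Sigma}[[T_y:y\in\mathcal{X}]]$ is an integral domain and $\pi(x)^n-1=(1+T_x)^n-1\neq 0$, this element is a non-zero-divisor on the ambient free module, and a fortiori on $A_2$. For $m\geq 3$, the same strategy calls for an analogous description of $A_m$ over $\Lambda_{m-1}$; the torsion-freeness of $\FinStepSolvQuo{\mathcal{F}}{m-1}$, which is classical for free pro-$\Sigma$ groups and can in any case be extracted inductively, guarantees $\GeneSubgrpTop{\pi(x)}\cong \mathbb{Z}_{\Sigma}$ and makes $\mathbb{Z}_{\Sigma}[[\GeneSubgrpTop{\pi(x)}]]\subset \Lambda_{m-1}$ into an integral domain in which $(1+T)^n-1$ is a non-zero-divisor.

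The hard part will be the module-theoretic input at this step: showing that $A_m$, regarded as a module over $\mathbb{Z}_{\Sigma}[[\GeneSubgrpTop{\pi(x)}]]$, has no nontrivial $((1+T)^n-1)$-torsion. I expect this to follow from the freeness of $\mathcal{F}$ via an inductive Fox-calculus presentation of $A_m$ as a submodule of a free $\Lambda_{m-1}$-module, or, alternatively, via a Lyndon--Hochschild--Serre argument applied to the successive extensions in the tower $\FinStepSolvQuo{\mathcal{F}}{m}\twoheadrightarrow \FinStepSolvQuo{\mathcal{F}}{m-1}\twoheadrightarrow \cdots \twoheadrightarrow \Abelian{\mathcal{F}}$, combined with the flatness of the relevant change-of-ring maps. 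This is the main technical obstacle of the proof.

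For the slimness corollary, I assume $\mathcal{F}\not\cong \mathbb{Z}_{\Sigma}$, so $|\mathcal{X}|\geq 2$. Given an open subgroup $H\subseteq \FinStepSolvQuo{\mathcal{F}}{m}$ and two distinct elements $x,y\in\mathcal{X}$, the finiteness of $\FinStepSolvQuo{\mathcal{F}}{m}/H$ provides nonzero integers $n,n'$ with $x^n,y^{n'}\in H$. The main statement then gives
\begin{equation*}
	\Centralizer{\FinStepSolvQuo{\mathcal{F}}{m}}{H}
	\subseteq \Centralizer{\FinStepSolvQuo{\mathcal{F}}{m}}{x^n}\cap \Centralizer{\FinStepSolvQuo{\mathcal{F}}{m}}{y^{n'}}
	= \GeneSubgrpTop{x}\cap \GeneSubgrpTop{y},
\end{equation*}
and the right-hand intersection is trivial because $x$ and $y$ span distinct $\mathbb{Z}_{\Sigma}$-summands of $\Abelian{\mathcal{F}}$, so that already their images in $\Abelian{\mathcal{F}}$ meet trivially.
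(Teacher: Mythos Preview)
Your induction does not bootstrap. The reduction ``$\pi(g)\in\Centralizer{\FinStepSolvQuo{\mathcal{F}}{m-1}}{\pi(x)^n}=\GeneSubgrpTop{\pi(x)}$'' is exactly the inductive hypothesis at level $m-1$, but for the base case $m=2$ the group $\FinStepSolvQuo{\mathcal{F}}{1}=\Abelian{\mathcal{F}}$ is abelian, so $\Centralizer{\Abelian{\mathcal{F}}}{\pi(x)^n}=\Abelian{\mathcal{F}}$, not $\GeneSubgrpTop{\pi(x)}$. Thus for $m=2$ you have no way to force $\pi(g)\in\GeneSubgrpTop{\pi(x)}$, and the Fox-embedding argument you give only treats elements already lying in $A_2$. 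This is a genuine missing ingredient. The paper supplies it as a separate result (its Proposition~\ref{freeprocenter}): for \emph{every} $m\geq 2$ one has $\Centralizer{\FinStepSolvQuo{\mathcal{F}}{m}}{x^n}\subset \GeneSubgrpTop{x}\cdot\DerivSeries{(\FinStepSolvQuo{\mathcal{F}}{m})}{m-1}$, proved by constructing explicit block upper-triangular matrix representations $\FinStepSolvQuo{\mathcal{F}}{m}\to\GL_{2u}(\mathbb{Z}/\ell^\sigma\mathbb{Z})$ in which $x$ has a nonzero off-diagonal block. This is the step that the original reference (Nakamura) got wrong, and it is not a formality.

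For the step $m\geq 3$ you correctly identify the obstacle---showing that $A_m$ has no $(\pi(x)^n-1)$-torsion over $\Lambda_{m-1}$---but leave it open. The paper does \emph{not} attack this head-on. Instead, having already placed $g$ in $\DerivSeries{(\FinStepSolvQuo{\mathcal{F}}{m})}{m-1}$ via the matrix proposition, it runs over open normal subgroups $H\supset\DerivSeries{(\FinStepSolvQuo{\mathcal{F}}{m})}{1}$; by Nielsen--Schreier the preimage $\tilde H\subset\mathcal{F}$ is free with $x^N$ among a free generating set (where $N$ is the order of $x$ in $\FinStepSolvQuo{\mathcal{F}}{m}/H$), and Lemma~\ref{lemma:mstep} gives $\FinStepSolvQuo{H}{m-1}\cong\FinStepSolvQuo{\tilde H}{m-1}$, so the inductive hypothesis at level $m-1$ applied to $\tilde H$ yields $\Centralizer{\FinStepSolvQuo{H}{m-1}}{x^{Nn}}=\GeneSubgrpTop{x^N}$. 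Since $g$ lands in $\DerivSeries{(\FinStepSolvQuo{H}{m-1})}{1}$, which meets $\GeneSubgrpTop{x^N}$ trivially, one gets $g\in\DerivSeries{H}{m-1}$ for all such $H$, hence $g=1$. This bypasses any direct analysis of zero-divisors in $\Lambda_{m-1}$.

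A minor point: $\Lambda_1=\mathbb{Z}_\Sigma[[\Abelian{\mathcal{F}}]]$ is \emph{not} an integral domain when $|\Sigma|>1$ (already $\mathbb{Z}_\Sigma=\prod_{p\in\Sigma}\mathbb{Z}_p$ has idempotents). The nonzero-divisor property of $\underline{x}^n-1$ still holds, but requires an argument; the paper gives one as a separate lemma.

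Your slimness deduction from the centralizer statement is correct and is exactly the paper's argument.
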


\medskip

Next, we explain an application of Theorem~\ref{ItheoremA} to the $m$-step solvable analogue of the Grothendieck conjecture.
In the rest of the introduction, we focus only on the case where the field $k$ is a sub-$p$-adic field for some prime number $p$ (i.e., a field that embeds as a subfield of a finitely generated extension of $\mathbb{Q}_{p}$).
In particular, the field $k$ has characteristic $0$.
For simplicity, we write
\begin{equation*}
	\EtFundGrpGeom{X}
	\coloneq \EtFundGrp{X_{\overline{k}}}^{\Sigma},
	\qquad\text{and}\qquad
	\FinStepSolvQuoGeom{\EtFundGrpPi{X}}{m}
	\coloneq \EtFundGrp{X}/\ker (\EtFundGrp{X_{\overline{k}}}
	\to\FinStepSolvQuo{\EtFundGrpGeom{X}}{m}).
\end{equation*}
By construction, we have the following exact sequence:
\begin{equation*}
	1
	\to\FinStepSolvQuo{\EtFundGrpGeom{X}}{m}
	\to\FinStepSolvQuoGeom{\EtFundGrpPi{X}}{m}
	\to\AbsGalGrp{k}
	\to 1.
\end{equation*}
Here $\AbsGalGrp{k}$ denotes the absolute Galois group of $k$.

The original conjecture of A.~Grothendieck was first proposed in his letter to G.~Faltings~\cite{MR1483108} and was proved by S.~Mochizuki in~\cite{MR1720187}.
Moreover, in~\cite[Theorem~18.1]{MR1720187}, S.~Mochizuki proved the following ``existence'' statement for an $m$-step solvable analogue of the Grothendieck conjecture for hyperbolic curves over a sub-$p$-adic field $k$:
\medskip
\begin{quote}\textit{
		Assume $\Sigma=\{p\}$.
		Let $m\in\mathbb{Z}_{\geq 2}$.
		Let $X_{1}$ and $X_{2}$ be smooth curves over a sub-$p$-adic field $k$.
		Assume that at least one of $X_{1}$ and $X_{2}$ is hyperbolic.
		Then, for any $\AbsGalGrp{k}$-isomorphism
		\begin{equation*}
			\theta: \FinStepSolvQuoGeom{\EtFundGrpPi{X_{1}}}{m+3}
			\to
			\FinStepSolvQuoGeom{\EtFundGrpPi{X_{2}}}{m+3},
		\end{equation*}
		there exists a $k$-isomorphism $\mathrm{pr}_{G}: X_{1}\to X_{2}$ such that the $\AbsGalGrp{k}$-isomorphism $\FinStepSolvQuoGeom{\EtFundGrpPi{X_{1}}}{m}\to\FinStepSolvQuoGeom{\EtFundGrpPi{X_{2}}}{m}$ induced by $\mathrm{pr}_{G}$ (up to composition with an inner automorphism coming from $\FinStepSolvQuo{\EtFundGrpGeom{X_{2}}}{m}$) coincides with the isomorphism induced by $\theta$.}
\end{quote}
\medskip
With a little additional argument, this theorem can be reformulated as the surjectivity of the following natural map:
\medskip
\begin{quote}\textit{
		We keep the notation and assumptions as above.
		Then the natural map
		\begin{equation}\label{intro:m-step-morphism-mochi}
			\Isom_{\overline{k}/k}\bigl(\UnivCov{X_{1}}^{m}/X_{1}, \UnivCov{X_{2}}^{m}/X_{2}\bigr)
			\to
			\Isom_{\AbsGalGrp{k}}^{(m+3)}\bigl(\FinStepSolvQuoGeom{\EtFundGrpPi{X_{1}}}{m}, \FinStepSolvQuoGeom{\EtFundGrpPi{X_{2}}}{m}\bigr)
		\end{equation}
		is surjective, where $\UnivCov{X_{i}}^{m}\to X_{i}$ is the maximal geometrically $m$-step solvable pro-$\Sigma$ Galois covering of $X_{i}$, and the right-hand set is the image of the natural map
		\begin{equation*}
			\Isom_{\AbsGalGrp{k}}\bigl(\FinStepSolvQuoGeom{\EtFundGrpPi{X_{1}}}{m+3}, \FinStepSolvQuoGeom{\EtFundGrpPi{X_{2}}}{m+3}\bigr)
			\to
			\Isom_{\AbsGalGrp{k}}\bigl(\FinStepSolvQuoGeom{\EtFundGrpPi{X_{1}}}{m}, \FinStepSolvQuoGeom{\EtFundGrpPi{X_{2}}}{m}\bigr).
		\end{equation*}}
\end{quote}
\medskip
In this paper, we prove the injectivity statement as follows:

\begin{Itheorem}[Theorem~\ref{cor:relative_anabelian}]\label{IcorollaryB}
	We keep the notation and assumptions as above.
	Then the natural map~\eqref{intro:m-step-morphism-mochi} is bijective.
\end{Itheorem}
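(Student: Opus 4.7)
Surjectivity of \eqref{intro:m-step-morphism-mochi} follows by unwinding Mochizuki's theorem~\cite[Theorem~18.1]{MR1720187} as indicated by the reformulation recalled just above, so the substantive content lies in injectivity, and this is where Theorem~\ref{ItheoremA} enters.

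Suppose $f_{1}, f_{2} \in \Isom_{\overline{k}/k}(\UnivCov{X_{1}}^{m}/X_{1}, \UnivCov{X_{2}}^{m}/X_{2})$ have the same image under~\eqref{intro:m-step-morphism-mochi}. Then the induced $\AbsGalGrp{k}$-equivariant isomorphisms $\alpha_{f_{i}}\colon \FinStepSolvQuoGeom{\EtFundGrpPi{X_{1}}}{m} \to \FinStepSolvQuoGeom{\EtFundGrpPi{X_{2}}}{m}$ differ by conjugation by some element $g \in \FinStepSolvQuo{\EtFundGrpGeom{X_{2}}}{m}$. Composing $f_{2}$ with the $\overline{k}$-deck transformation of $\UnivCov{X_{2}}^{m}/X_{2}$ corresponding to $g$, I reduce to the normalized situation $\alpha_{f_{1}} = \alpha_{f_{2}}$. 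One then checks that the underlying $k$-isomorphisms $X_{1} \to X_{2}$ of $f_{1}$ and $f_{2}$ must coincide, so that $h \coloneq f_{2}^{-1} \circ f_{1}$ is a $\overline{k}$-automorphism of $\UnivCov{X_{1}}^{m}$ over $X_{1}$, i.e.\ an element of the $\overline{k}$-deck transformation group $\FinStepSolvQuo{\EtFundGrpGeom{X_{1}}}{m}$. The hypothesis $\alpha_{f_{1}} = \alpha_{f_{2}}$ then translates into the triviality of the conjugation action of $h$ on $\FinStepSolvQuoGeom{\EtFundGrpPi{X_{1}}}{m}$; in particular, $h$ lies in the center of $\FinStepSolvQuo{\EtFundGrpGeom{X_{1}}}{m}$. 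By Theorem~\ref{ItheoremA} applied over $\overline{k}$, this group is center-free, so $h = 1$ and $f_{1} = f_{2}$.

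The main technical obstacle is the descent step: verifying carefully that once $\alpha_{f_{1}} = \alpha_{f_{2}}$, the underlying $k$-isomorphisms $X_{1} \to X_{2}$ of $f_{1}$ and $f_{2}$ genuinely agree, so that $h$ is an automorphism over $X_{1}$ (rather than sitting over some non-trivial $k$-automorphism of $X_{1}$). This calls for combining Mochizuki's existence-and-uniqueness at level $m+3$ with the bookkeeping defining the map~\eqref{intro:m-step-morphism-mochi}; once this descent is in place, the remaining group-theoretic core is handled cleanly by the center-freeness of Theorem~\ref{ItheoremA}.
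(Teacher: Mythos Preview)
Your approach is essentially the paper's, just phrased elementwise rather than via a commutative diagram of short exact sequences; the paper packages the same two ingredients---center-freeness for the deck-transformation part, and injectivity at the level of $k$-isomorphisms for the base---into a snake-lemma argument.

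Two remarks. First, the initial normalization by conjugation is superfluous: the target of~\eqref{intro:m-step-morphism-mochi} is $\Isom_{\AbsGalGrp{k}}^{(m+3)}$, a subset of $\Isom_{\AbsGalGrp{k}}$ itself and \emph{not} its quotient by $\Inn(\FinStepSolvQuo{\EtFundGrpGeom{X_{2}}}{m})$, so equal images already mean $\alpha_{f_{1}}=\alpha_{f_{2}}$ on the nose.

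Second, and more substantively, your ``descent step'' is genuinely required but is not supplied by Mochizuki's theorem. The statement of~\cite[Theorem~18.1]{MR1720187} is an existence statement; it does not assert uniqueness of the $k$-isomorphism $\phi$ compatible (at level $m$, up to inner) with a given $\theta$. The paper closes this gap by invoking a separate result, \cite[Lemma~4.9]{MR4745885}, which gives injectivity of
\[
\Isom_{k}(X_{1},X_{2})\ \longrightarrow\ \Isom_{\AbsGalGrp{k}}^{\Out,(m+3)}\bigl(\FinStepSolvQuoGeom{\EtFundGrpPi{X_{1}}}{m},\FinStepSolvQuoGeom{\EtFundGrpPi{X_{2}}}{m}\bigr).
\]
Once this is available, your argument (equivalently, the snake lemma applied to the paper's diagram) goes through exactly as you describe, with center-freeness from Theorem~\ref{ItheoremA} handling the remaining piece.
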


\section*{Notation and preliminaries in group theory}

For any profinite group $G$, we define the \textit{topological derived series} of $G$ by
$\DerivSeries{G}{0}\coloneq G$ and
\begin{equation*}
	\DerivSeries{G}{m} \coloneq \overline{[\DerivSeries{G}{m-1},\DerivSeries{G}{m-1}]}
	\qquad
	(m\geq 1),
\end{equation*}
where $\overline{[\DerivSeries{G}{m-1},\DerivSeries{G}{m-1}]}$ denotes the closed subgroup topologically generated by commutators of $\DerivSeries{G}{m-1}$.
For any $m\in\mathbb{Z}_{\geq 0}$, we set
\begin{equation*}
	\FinStepSolvQuo{G}{m} \coloneq G/\DerivSeries{G}{m},
\end{equation*}
and call it the \textit{maximal $m$-step solvable quotient} of $G$.
For simplicity, we write $\Abelian{G}$ for the abelianization of $G$.
With this notation, we have the following basic lemma:

\begin{Ilemma}\label{lemma:mstep}
	Let $f \colon G \to Q$ be a surjective morphism of profinite groups.
	Let $H \subset Q$ be an open subgroup and set $\tilde{H} \coloneq f^{-1}(H) \subset G$.
	Fix an integer $n\geq 0$.
	If $\ker(f) \subset \DerivSeries{\tilde{H}}{n}$, then the natural morphism $\FinStepSolvQuo{\tilde{H}}{n} \to \FinStepSolvQuo{H}{n}$ induced by $f$ is an isomorphism of profinite groups.
\end{Ilemma}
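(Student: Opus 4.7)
The plan is to reduce the claim to the equality of closed subgroups
\[
  \ker\bigl(\tilde{H}\xrightarrow{f|_{\tilde{H}}} H\to \FinStepSolvQuo{H}{n}\bigr)
  = \DerivSeries{\tilde{H}}{n},
\]
after which the first isomorphism theorem for topological groups, combined with the standard fact that a continuous bijection between compact Hausdorff groups is automatically a topological isomorphism, yields the result.

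I would begin by observing that $\ker(f)\subseteq \tilde{H}$ automatically (since $1\in H$), and, assuming $f$ surjective as is implicit in the setup, that the restriction $f|_{\tilde{H}}\colon\tilde{H}\to H$ is a continuous surjective homomorphism with $\ker(f|_{\tilde{H}})=\ker(f)$. The key preliminary is to show by induction on $n$ that $f|_{\tilde{H}}\bigl(\DerivSeries{\tilde{H}}{n}\bigr)=\DerivSeries{H}{n}$. The base case $n=0$ is surjectivity. For the inductive step, any homomorphism sends commutators onto commutators (giving the identity for the non-closed derived subgroup), and a continuous surjective homomorphism between profinite (compact Hausdorff) groups is a closed map, hence commutes with topological closure.

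Granting this, if $x\in\tilde{H}$ satisfies $f(x)\in\DerivSeries{H}{n}$, I would choose $y\in\DerivSeries{\tilde{H}}{n}$ with $f(y)=f(x)$; then $xy^{-1}\in\ker(f)\subseteq\DerivSeries{\tilde{H}}{n}$ by hypothesis, whence $x\in\DerivSeries{\tilde{H}}{n}$. This establishes the displayed equality, and the induced continuous bijection $\FinStepSolvQuo{\tilde{H}}{n}\to\FinStepSolvQuo{H}{n}$ is then automatically a topological isomorphism. There is no serious obstacle: the argument is essentially the first isomorphism theorem, and the only technical care required is the closure step in the induction, a routine feature of profinite groups.
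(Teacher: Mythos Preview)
Your proposal is correct and follows essentially the same approach as the paper: both establish that $f|_{\tilde{H}}$ maps $\DerivSeries{\tilde{H}}{n}$ onto $\DerivSeries{H}{n}$ (using that continuous surjections of profinite groups are closed), and then deduce the isomorphism---the paper packages the final step as a snake-lemma argument on the evident commutative diagram with exact rows, while you unpack the same content by directly computing the kernel via a lift. Your explicit flag that surjectivity of $f$ is being used is a helpful clarification; the paper's proof also relies on it (``Since $f|_{\tilde{H}}\colon \tilde{H}\to H$ is surjective'') without stating it in the hypotheses.
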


\begin{proof}
	Since profinite groups are compact Hausdorff, the image of a morphism (i.e., continuous homomorphism) is compact, hence closed.
	In particular, the morphism $f$ sends closed subgroups to closed subgroups.
	Hence we have $f(\DerivSeries{\tilde{H}}{n}) \subseteq \DerivSeries{H}{n}$.
	Since $f|_{\tilde{H}}\colon \tilde{H} \to H$ is surjective, the restriction $f|_{\DerivSeries{\tilde{H}}{n}}\colon \DerivSeries{\tilde{H}}{n} \to \DerivSeries{H}{n}$ is also surjective.
	Consider the commutative diagram with exact rows:
	\begin{equation*}
		\vcenter{\xymatrix@R=18pt@C=55pt{
		1 \ar[r] & \DerivSeries{\tilde{H}}{n} \ar[r] \ar@{->>}[d] & \tilde{H} \ar[r] \ar@{->>}[d] &
		\FinStepSolvQuo{\tilde{H}}{n} \ar[r] \ar[d] & 1 \\
		1 \ar[r] & \DerivSeries{H}{n} \ar[r] & H \ar[r] &
		\FinStepSolvQuo{H}{n} \ar[r] & 1 .
		}}
	\end{equation*}
	The kernel of the middle vertical morphism $\tilde{H} \to H$ is $\ker(f)$.
	By assumption $\ker(f)\subseteq \DerivSeries{\tilde{H}}{n}$, the kernel of the left-hand vertical morphism
	$\DerivSeries{\tilde{H}}{n} \to \DerivSeries{H}{n}$ is also $\ker(f)$.
	By the diagram chase, the right-hand vertical morphism is an isomorphism.
\end{proof}

We will frequently apply Lemma~\ref{lemma:mstep} in the setting that $Q \coloneq \FinStepSolvQuo{G}{m+n}$ and $H$ contains $\DerivSeries{(\FinStepSolvQuo{G}{m+n})}{m}$.
In this case, Lemma~\ref{lemma:mstep} shows that the natural surjection $\FinStepSolvQuo{\tilde{H}}{n} \twoheadrightarrow \FinStepSolvQuo{H}{n}$ is an isomorphism.
This observation is recorded in~\cite[Lemma~1.1]{MR4745885}.

\section{Centralizers in free \texorpdfstring{$m$}{m}-step solvable groups}\label{free_section}
In this section, we compute explicitly the centralizer of a free generator in a free $m$-step solvable pro-$\Sigma$ group.
A result of this form is stated in~\cite[Section~1.1]{MR4578639}; however, the proof of~\cite[Proposition~1.1.1]{MR4578639} contains an error and does not work as written.
In Proposition~\ref{freeprocenter} below, we provide a corrected argument.
Throughout this section, let $\Sigma$ be a non-empty set of prime numbers.
Moreover, for a profinite group $G$ and a subset $S\subset G$, we define
\begin{equation*}
	\Centralizer{G}{S}\coloneq \{g\in G \mid \forall s\in S,\, gs=sg\}
\end{equation*}
and call it the \textit{centralizer} of $S$ in $G$.
(Note that this group is already closed in $G$, and hence profinite.)
When $S=\{x\}$, we write $\Centralizer{G}{x}$ instead of $\Centralizer{G}{\{x\}}$ for simplicity.

\subsection{Pro-\texorpdfstring{$\Sigma$}{Sigma} Fox calculus and the Blanchfield--Lyndon sequence}
\subsubsection{}
We recall the pro-$\Sigma$ Fox calculus and the Blanchfield--Lyndon sequence.
For a pro-$\Sigma$ group $G$, we define its completed group ring by
\begin{equation*}
	\mathbb{Z}_{\Sigma}[[G]]
	\coloneq \varprojlim_{H,\ n}(\mathbb{Z}/n\mathbb{Z})[G/H],
\end{equation*}
where $H$ and $n$ run over all open normal subgroups of $G$ and all positive integers whose prime factors lie in $\Sigma$, respectively.
In~\cite{MR53938}, R.~H.~Fox developed the (discrete) free differential calculus.
Later, Y.~Ihara~\cite{MR1708605} established a pro-$\Sigma$ analogue for a finitely generated free pro-$\Sigma$ group $\mathcal{F}$ with free generating set $X=\{x_i\}_{1\leq i\leq r}$.
For any $i$, a continuous $\mathbb{Z}_{\Sigma}$-linear map
\begin{equation*}
	\partial_i:\mathbb{Z}_{\Sigma}[[\mathcal{F}]]\to \mathbb{Z}_{\Sigma}[[\mathcal{F}]]
\end{equation*}
satisfying the following properties is called the \textit{free differential} with respect to $x_i$:
\begin{enumerate}[(i)]
	\item
	      $\partial_i(1)=0$, where $1$ is the unit of $\mathbb{Z}_{\Sigma}[[\mathcal{F}]]$;
	\item
	      $\partial_i(x_j)=\delta_{i,j}$;
	\item
	      for any $\lambda,\tilde{\lambda}\in\mathbb{Z}_{\Sigma}[[\mathcal{F}]]$, we have
	      \begin{equation*}
		      \partial_i(\lambda\tilde{\lambda})
		      =\partial_i(\lambda)\,s(\tilde{\lambda})+\lambda\,\partial_i(\tilde{\lambda}),
	      \end{equation*}
	      where $s$ is the augmentation morphism $\mathbb{Z}_{\Sigma}[[\mathcal{F}]]\to \mathbb{Z}_{\Sigma}$.
\end{enumerate}
For each $i$, such a free differential is uniquely determined; see~\cite[Appendix]{MR1708605}.
Moreover, every $\lambda\in\mathbb{Z}_{\Sigma}[[\mathcal{F}]]$ admits an expansion
\begin{equation*}
	\lambda = s(\lambda)\cdot 1 + \sum_{i=1}^r \partial_{i}(\lambda)(x_i-1),
\end{equation*}
and this expansion is unique (see~\cite[Theorem~A-1]{MR1708605}).

\subsubsection{}
Let $\mathcal{N}$ be a closed normal subgroup of $\mathcal{F}$.
The conjugation action of $\mathcal{F}/\mathcal{N}$ on $\Abelian{\mathcal{N}}$ extends continuously to an action of $\mathbb{Z}_{\Sigma}[[\mathcal{F}/\mathcal{N}]]$.
We regard $\Abelian{\mathcal{N}}$ as a $\mathbb{Z}_{\Sigma}[[\mathcal{F}/\mathcal{N}]]$-module by this action.
Let $\pi:\mathbb{Z}_{\Sigma}[[\mathcal{F}]]\to \mathbb{Z}_{\Sigma}[[\mathcal{F}/\mathcal{N}]]$ be the natural projection.
For each $i$, define
\begin{equation*}
	\tilde{\iota}:\mathcal{N}\to \mathbb{Z}_{\Sigma}[[\mathcal{F}/\mathcal{N}]]^{\oplus r};\qquad
	\tilde{\iota}(n)\coloneq \bigl(\pi\circ \partial_i(n)\bigr)_{1\leq i\leq r}.
\end{equation*}
Since $\pi(n)=1$ for each $n\in\mathcal{N}$, we have $\tilde{\iota}(n_1n_2)=\tilde{\iota}(n_1)+\tilde{\iota}(n_2)$.
Therefore, the continuous map $\tilde{\iota}$ is a homomorphism and factors through $\Abelian{\mathcal{N}}$.
We write $\iota$ for the induced morphism
\begin{equation*}
	\Abelian{\mathcal{N}}\to
	\mathbb{Z}_{\Sigma}[[\mathcal{F}/\mathcal{N}]]^{\oplus r}.
\end{equation*}
Using the free differentials, Y.~Ihara proved the profinite Blanchfield--Lyndon sequence:

\begin{proposition}[The Blanchfield--Lyndon exact sequence; {see~\cite[Theorem~A-2]{MR1708605}}]\label{bLtheory}
	Let $\mathcal{F}$ be a free pro-$\Sigma$ group of finite rank $r$ with free generating set $X=\{x_i\}_{1\leq i\leq r}$, and let $\mathcal{N}$ be a closed normal subgroup of $\mathcal{F}$.
	Then the sequence
	\begin{equation*}
		0\to \Abelian{\mathcal{N}}\xrightarrow{\iota}
		\mathbb{Z}_{\Sigma}[[\mathcal{F}/\mathcal{N}]]^{\oplus r}
		\xrightarrow{f}
		\mathbb{Z}_{\Sigma}[[\mathcal{F}/\mathcal{N}]]
		\xrightarrow{s}
		\mathbb{Z}_{\Sigma}\to 0
	\end{equation*}
	of $\mathbb{Z}_{\Sigma}[[\mathcal{F}/\mathcal{N}]]$-modules is exact.
	Here, the morphism $f$ is given by
	\begin{equation*}
		f\bigl((\lambda_1,\cdots,\lambda_{r})\bigr)=\sum_{i=1}^r \lambda_i(\pi(x_i)-1).
	\end{equation*}
\end{proposition}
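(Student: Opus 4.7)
The plan is to verify exactness of the sequence position by position, with the unique Fox expansion as the main technical tool. Write $I_{\mathcal{F}}\coloneq\ker(s\colon\mathbb{Z}_{\Sigma}[[\mathcal{F}]]\to\mathbb{Z}_{\Sigma})$ and $I_{\mathcal{F}/\mathcal{N}}$ for the augmentation ideal of $\mathbb{Z}_{\Sigma}[[\mathcal{F}/\mathcal{N}]]$, and set $\mathcal{J}\coloneq\ker(\pi)$. Surjectivity of $s$ is tautological. Exactness at $\mathbb{Z}_{\Sigma}[[\mathcal{F}/\mathcal{N}]]$ amounts to $\mathrm{Im}(f)=I_{\mathcal{F}/\mathcal{N}}$: one inclusion is immediate, and the reverse holds because $\mathcal{F}/\mathcal{N}$ is topologically generated by $\{\pi(x_i)\}$, so $I_{\mathcal{F}/\mathcal{N}}$ is topologically generated as a left $\mathbb{Z}_{\Sigma}[[\mathcal{F}/\mathcal{N}]]$-module by $\{\pi(x_i)-1\}\subseteq\mathrm{Im}(f)$. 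For $f\circ\iota=0$, apply the unique Fox expansion to $n\in\mathcal{N}$: since $s(n)=1$, we get $n-1=\sum_i\partial_i(n)(x_i-1)$, and applying $\pi$ together with $\pi(n)=1$ yields $0=\sum_i\pi(\partial_i(n))(\pi(x_i)-1)=f(\iota(n))$.

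For $\ker(f)\subseteq\mathrm{Im}(\iota)$, given $(\lambda_i)\in\ker(f)$, lift each $\lambda_i$ to $\tilde{\lambda}_i\in\mathbb{Z}_{\Sigma}[[\mathcal{F}]]$ and set $\mu\coloneq\sum_i\tilde{\lambda}_i(x_i-1)$. Then $\pi(\mu)=0$, so $\mu\in\mathcal{J}$. Since $\mathcal{J}$ is the closed left ideal generated by $\{n-1:n\in\mathcal{N}\}$, one writes $\mu$ as a convergent sum $\sum_j\alpha_j(n_j-1)$. Expanding each $n_j-1=\sum_i\partial_i(n_j)(x_i-1)$ by Fox calculus and comparing with the expansion $\mu=\sum_i\tilde{\lambda}_i(x_i-1)$, the uniqueness of the Fox expansion forces $\tilde{\lambda}_i=\sum_j\alpha_j\partial_i(n_j)$, and applying $\pi$ gives $\lambda_i=\iota(n)_i$, where $n\coloneq\sum_j\pi(\alpha_j)\cdot n_j$ is a convergent sum in the complete $\mathbb{Z}_{\Sigma}[[\mathcal{F}/\mathcal{N}]]$-module $\Abelian{\mathcal{N}}$. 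A brief preliminary computation using the Leibniz rule for $\partial_i$ together with $\partial_i(gg^{-1})=0$ shows that $\iota$ is in fact a $\mathbb{Z}_{\Sigma}[[\mathcal{F}/\mathcal{N}]]$-module homomorphism, so this sum is meaningful and equals $\iota(n)$.

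The main obstacle is the injectivity of $\iota$. The cleanest conceptual approach is to read the whole sequence cohomologically: the Fox isomorphism $I_{\mathcal{F}}\cong\mathbb{Z}_{\Sigma}[[\mathcal{F}]]^{\oplus r}$ realizes $0\to I_{\mathcal{F}}\to\mathbb{Z}_{\Sigma}[[\mathcal{F}]]\to\mathbb{Z}_{\Sigma}\to 0$ as a length-one free resolution of the trivial module, so tensoring with $\mathbb{Z}_{\Sigma}[[\mathcal{F}/\mathcal{N}]]$ over $\mathbb{Z}_{\Sigma}[[\mathcal{F}]]$ identifies $\ker(f)$ with $\mathrm{Tor}_1^{\mathbb{Z}_{\Sigma}[[\mathcal{F}]]}(\mathbb{Z}_{\Sigma}[[\mathcal{F}/\mathcal{N}]],\mathbb{Z}_{\Sigma})$. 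A profinite analogue of Shapiro's lemma, using the isomorphism $\mathbb{Z}_{\Sigma}[[\mathcal{F}/\mathcal{N}]]\cong\mathbb{Z}_{\Sigma}[[\mathcal{F}]]\widehat{\otimes}_{\mathbb{Z}_{\Sigma}[[\mathcal{N}]]}\mathbb{Z}_{\Sigma}$, identifies this $\mathrm{Tor}$ group with $H_1(\mathcal{N},\mathbb{Z}_{\Sigma})=\Abelian{\mathcal{N}}$, and one checks by tracing through the identifications that the resulting map $\Abelian{\mathcal{N}}\cong\mathrm{Tor}_1\to\mathbb{Z}_{\Sigma}[[\mathcal{F}/\mathcal{N}]]^{\oplus r}$ is precisely $\iota$. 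As a more hands-on alternative, one can establish directly the profinite Magnus-type isomorphism $\Abelian{\mathcal{N}}\cong\mathcal{J}/(\mathcal{J}\cdot I_{\mathcal{F}})$ given by $n\mapsto n-1\bmod\mathcal{J}\cdot I_{\mathcal{F}}$, and then translate it back to $\iota$ via the Fox identification of the previous paragraph.
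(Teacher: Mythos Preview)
The paper does not give its own proof of this proposition: it is quoted from Ihara \cite[Theorem~A-2]{MR1708605} and used as a black box. There is therefore nothing in the paper to compare your argument against.

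That said, your sketch is a reasonable outline of how the profinite Blanchfield--Lyndon sequence is established, following the standard route (the profinite analogue of the classical Crowell argument). A couple of remarks. First, your homological paragraph for the injectivity of $\iota$ in fact simultaneously identifies $\ker(f)$ with $\Abelian{\mathcal{N}}$ via $\iota$, so it subsumes the preceding paragraph on $\ker(f)\subseteq\mathrm{Im}(\iota)$; once you commit to the $\mathrm{Tor}$ computation, the hands-on lifting argument is redundant. Second, the step where you write an arbitrary $\mu\in\mathcal{J}$ as a ``convergent sum $\sum_j\alpha_j(n_j-1)$'' and then manipulate it term by term is precisely the place where one has to be careful in the completed setting (what does such a sum mean, and why may one swap it with the Fox expansion?); this is exactly what the completed-tensor-product and profinite-$\mathrm{Tor}$ formalism packages cleanly, which is another reason to prefer the homological route over the elementwise one. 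The Magnus-type alternative $\Abelian{\mathcal{N}}\cong\mathcal{J}/(\mathcal{J}\cdot I_{\mathcal{F}})$ is also correct and is essentially how Ihara's appendix proceeds.
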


The Blanchfield--Lyndon exact sequence admits a generalization to arbitrary profinite groups, known as the \textit{complete Crowell exact sequence}; see~\cite[Section~10.4]{MR4770091} for details.

\subsection{A computation of a centralizer in a free pro-\texorpdfstring{$\Sigma$}{Sigma} product}
\subsubsection{}
A slightly different version of the following proposition first appeared in~\cite[Lemma~2.1.2]{MR1298541}, where it was used to prove the center-freeness of free discrete groups.
We generalize it to our setting as follows:

\begin{lemma}\label{groupcl}
	Let $G$ be a finite group, let $x\in G$, and put
	$H\coloneq\GeneSubgrp{x}$.
	Let $s$ be the order of $x$ in $G$, and let $N$ be a positive integer such that $N\mid s$.
	Let $n$ be a positive integer whose image in $\mathbb{Z}/N\mathbb{Z}$ is nonzero.
	Let $G\times H$ act on the free $\mathbb{Z}/N\mathbb{Z}$-module
	$(\mathbb{Z}/N\mathbb{Z})[G/H]$ through the natural left action of the first factor $G$ on the set of left cosets $G/H$; the second factor $H$ acts trivially.
	Define
	\begin{equation*}
		\widetilde G
		\coloneq
		(\mathbb{Z}/N\mathbb{Z})[G/H]\rtimes (G\times H).
	\end{equation*}
	Let $\tau\coloneq ([H],(x,x))\in\widetilde G$, and let
	$\operatorname{pr}_{G}:\widetilde G\to G$ be the natural projection.
	If $z\in\widetilde G$ satisfies
	$z\tau^{n}z^{-1}\in\GeneSubgrp{\tau}$, then
	$\operatorname{pr}_{G}(z)\in H$.
\end{lemma}

\begin{proof}
	Since $x\in H$, the element $x$ fixes the coset $H\in G/H$.
	Hence, for every $r\geq 0$, we have
	$\tau^{r}=(r[H],(x^{r},x^{r}))$.
	In particular, $\tau^{s}=1$, since $x^s=1$ and $s[H]=0$ in
	$(\mathbb{Z}/N\mathbb{Z})[G/H]$.
	Write $z=({\bf b},(a,h))$, where
	${\bf b}\in(\mathbb{Z}/N\mathbb{Z})[G/H]$, $a\in G$, and $h\in H$.
	Assume that $z\tau^{n}z^{-1}\in\GeneSubgrp{\tau}$.
	Since $\tau$ has finite order, there exists an integer $r\geq 0$ such that
	$z\tau^{n}z^{-1}=\tau^{r}$.  Equivalently, $z\tau^{n}=\tau^{r}z$.
	Using the multiplication rule, we compute
	\begin{equation*}
	\begin{aligned}
		z\tau^{n}
		&=&
		({\bf b}+a(n[H]),(ax^{n},hx^{n})),\\
		\tau^{r}z
		&=&
		(r[H]+x^{r}{\bf b},(x^{r}a,x^{r}h)).
	\end{aligned}
	\end{equation*}
	Therefore, comparing the second components in $G\times H$, we obtain
	$hx^{n}=x^{r}h$ in $H$.
	Since $H=\GeneSubgrp{x}$ is cyclic, the element $h$ commutes with $x$.
	Hence $hx^{n}=x^{r}h$ implies $x^{n}=x^{r}$.
	Thus, $r\equiv n\pmod{s}$, and since $N\mid s$, we have
	$r=n$ in $\mathbb{Z}/N\mathbb{Z}$.

	Next compare the first components of $z\tau^{n}=\tau^{r}z$.
	From the computations above, we obtain
	\begin{equation*}
		{\bf b}+a(n[H])=r[H]+x^{r}{\bf b}.
	\end{equation*}
	Now take the coefficient of the basis vector $[H]$ on both sides.
	Since $x^r\in H$, the element $x^r$ fixes the coset $H$, so the coefficient of $[H]$ in $x^r{\bf b}$ is the same as the coefficient of $[H]$ in ${\bf b}$.
	Therefore the coefficient of $[H]$ in $a(n[H])$ is equal to $r$, hence equal to $n$ in $\mathbb{Z}/N\mathbb{Z}$.
	On the other hand, $a[H]=[aH]$.  
	Thus, the coefficient of $[H]$ in
	$a(n[H])=n[aH]$ is $n$ if $aH=H$, and is $0$ if $aH\neq H$.
	Hence the hypothesis $n\neq 0$ in $\mathbb{Z}/N\mathbb{Z}$ gives $a\in H$.
\end{proof}

\begin{proposition}\label{freeprocenter}
	Let $\Omega=\mathcal{C}*P$ be the free pro-$\Sigma$ product (see~\cite[Proposition~9.1.2]{MR2599132})
	of a procyclic pro-$\Sigma$ group $\mathcal{C}$, topologically generated by an element $x$, and a pro-$\Sigma$ group $P$.
	Let $m\in\mathbb{Z}_{\geq 2}$.
	Then, for any $n\in\mathbb{Z}$ such that $x^{n}\neq 1$ in $\mathcal{C}$, we have
	\begin{equation}\label{freeprocenter:eq}
		\Centralizer{\FinStepSolvQuo{\Omega}{m}}{x^{n}}
		\subset
		\GeneSubgrpTop{x} \cdot \DerivSeries{(\FinStepSolvQuo{\Omega}{m})}{m-1}
	\end{equation}
	as a subgroup of $\FinStepSolvQuo{\Omega}{m}$, where $\GeneSubgrpTop{x}$ denotes the closed subgroup of $\FinStepSolvQuo{\Omega}{m}$ topologically generated by the image of $x$.
\end{proposition}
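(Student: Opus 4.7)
My plan is to reduce to the case where $\Omega$ is topologically finitely generated, fix a free pro-$\Sigma$ cover $\mathcal{F}\twoheadrightarrow\Omega$ with $\mathcal{F}$ of finite rank and free generators $x_{1}=x,x_{2},\ldots,x_{r}$, and apply pro-$\Sigma$ Fox calculus on $\mathcal{F}$ together with a Blanchfield--Lyndon-type exact sequence (Proposition~\ref{bLtheory}, or its Crowell generalization for general profinite groups). Lemma~\ref{freeprocenter_lemma} will be the key technical input, allowing cancellation of $1-\bar{x}^{n}$ in the completed group ring $R\coloneq\mathbb{Z}_{\Sigma}[[\FinStepSolvQuo{\Omega}{m-1}]]$ when the naive integral-domain argument fails. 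The reduction step uses that $\GeneSubgrpTop{x}\cdot\DerivSeries{(\FinStepSolvQuo{\Omega}{m})}{m-1}$ is closed and that $\Omega=\mathcal{C}\ast P$ is the inverse limit of $\mathcal{C}\ast P_{i}$ over topologically finitely generated quotients $P_{i}$ of $P$.

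For $g\in\Centralizer{\FinStepSolvQuo{\Omega}{m}}{x^{n}}$, I lift to $\tilde{g}\in\mathcal{F}$. The Leibniz rule for the pro-$\Sigma$ Fox derivatives $\partial_{j}$, combined with the observation that the image $\bar{g}\in\FinStepSolvQuo{\Omega}{m-1}$ already commutes with $\bar{x}^{n}$, applied to the commutator $[\tilde{g},x^{n}]$ through a suitable Blanchfield--Lyndon/Crowell-type injection, will yield for each $j$ the identity
\[
(1-\bar{x}^{n})\,\pi\partial_{j}(\tilde{g})=(\bar{g}-1)\,\pi\partial_{j}(x^{n})
\]
in $R$, where $\pi$ denotes the composite projection $\mathbb{Z}_{\Sigma}[[\mathcal{F}]]\to R$. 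For $j\ne 1$ we have $\pi\partial_{j}(x^{n})=0$, so the identity reduces to $(1-\bar{x}^{n})\,\pi\partial_{j}(\tilde{g})=0$. Once this is improved to $\pi\partial_{j}(\tilde{g})=0$ for every $j\ne 1$, the Fox expansion $\bar{g}-1=\sum_{j=1}^{r}\pi\partial_{j}(\tilde{g})(\bar{x}_{j}-1)$ collapses to $\bar{g}-1\in R\cdot(\bar{x}-1)$, and the coset-module identification $R/R(\bar{x}-1)\cong\mathbb{Z}_{\Sigma}[[\FinStepSolvQuo{\Omega}{m-1}/\GeneSubgrpTop{\bar{x}}]]$ forces $\bar{g}\in\GeneSubgrpTop{\bar{x}}\subset\FinStepSolvQuo{\Omega}{m-1}$, equivalently $g\in\GeneSubgrpTop{x}\cdot\DerivSeries{(\FinStepSolvQuo{\Omega}{m})}{m-1}$.

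The main obstacle is precisely the step from $(1-\bar{x}^{n})\,\pi\partial_{j}(\tilde{g})=0$ to $\pi\partial_{j}(\tilde{g})=0$. Since $\FinStepSolvQuo{\Omega}{m-1}$ may carry $\Sigma$-torsion—most notably when $\mathcal{C}$ is a finite cyclic group of order divisible by some prime in $\Sigma$—the ring $R$ is generally not an integral domain, and $1-\bar{x}^{n}$ is a genuine zero-divisor. This is exactly where Lemma~\ref{freeprocenter_lemma} enters: one passes to a sufficiently large $\mathbb{Z}/\ell^{\sigma}\mathbb{Z}$-reduction of $R$ and, representing multiplication by $1-\bar{x}^{n}$ on a finite free submodule as a matrix, factors its action through multiplication by an integer $\tilde{n}$ whose $\ell$-adic valuation is strictly less than $\sigma$; Lemma~\ref{freeprocenter_lemma} then kills the representing matrix of $\pi\partial_{j}(\tilde{g})$ modulo $\ell$. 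Assembling these vanishings as $\sigma\to\infty$ and over all $\ell\in\Sigma$, via compactness, yields $\pi\partial_{j}(\tilde{g})=0$. When $\mathcal{C}$ is infinite procyclic the cancellation is immediate, because $R$ then decomposes as a finite product of $\ell$-adic power-series domains in each of which $1-\bar{x}^{n}$ is a non-zero-divisor.
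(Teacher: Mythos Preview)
Your Fox-calculus route diverges entirely from the paper's argument and carries a genuine gap at the cancellation step. The paper does not use Fox calculus for this proposition; instead, for each finite quotient $\rho\colon\FinStepSolvQuo{\Omega}{m}\twoheadrightarrow G$ factoring through $\FinStepSolvQuo{\Omega}{m-1}$, it embeds $G$ in $\GL_{u}(\mathbb{Z}/\ell^{\sigma}\mathbb{Z})$ via the regular representation, builds the block upper-triangular group $\tilde G\subset\GL_{2u}(\mathbb{Z}/\ell^{\sigma}\mathbb{Z})$, and defines $\psi\colon\Omega\to\tilde G$ sending $x$ to the shear matrix $\bigl(\begin{smallmatrix}\rho(x)&\rho(x)\\0&\rho(x)\end{smallmatrix}\bigr)$ and each $p\in P$ to $\bigl(\begin{smallmatrix}\rho(p)&0\\0&I\end{smallmatrix}\bigr)$. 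The crucial point is that $\psi(x)^{sn}=\bigl(\begin{smallmatrix}I&snI\\0&I\end{smallmatrix}\bigr)$ (with $s$ the order of $\rho(x)$), so commuting with $\psi(x)^{n}$ forces the \emph{integer} equation $sn(A-C)=0$ on the diagonal blocks of any centralizing element. That is exactly the shape to which Lemma~\ref{freeprocenter_lemma} applies, giving $A\equiv C\pmod{\ell}$ and hence $A=C\in\langle\rho(x)\rangle$.

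Your invocation of Lemma~\ref{freeprocenter_lemma} does not go through. From $(1-\bar{x}^{n})\,y=0$ in $R=\mathbb{Z}_{\Sigma}[[\FinStepSolvQuo{\Omega}{m-1}]]$ you want $y=0$; but when $\mathcal{C}$ is finite of order $d$ the element $1+\bar{x}^{n}+\cdots+\bar{x}^{n(d/\gcd(n,d)-1)}$ is a nonzero annihilator of $1-\bar{x}^{n}$, so the implication is simply false. Lemma~\ref{freeprocenter_lemma} treats annihilation by an \emph{integer} $\tilde n$, whereas $1-\bar{x}^{n}$ has augmentation $0$: there is no element $q\in R$ with $(1-\bar{x}^{n})q$ equal to a nonzero integer, so your claim that multiplication by $1-\bar{x}^{n}$ ``factors through multiplication by an integer $\tilde n$'' cannot be made precise. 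It is precisely the passage to the unipotent power $\psi(x)^{sn}$ in the paper's matrix construction that converts the commutation hypothesis into a genuine integer annihilation, and your framework has no substitute for this step. There is also a secondary gap earlier: your key identity requires $\pi\partial_{j}([\tilde g,x^{n}])=0$, i.e.\ $[\tilde g,x^{n}]\in\DerivSeries{\mathcal{N}}{1}$ for $\mathcal{N}=\ker(\mathcal{F}\to\FinStepSolvQuo{\Omega}{m-1})$, but $[\tilde g,x^{n}]$ is only known to lie in $\ker(\mathcal{F}\to\FinStepSolvQuo{\Omega}{m})$, which properly contains $\DerivSeries{\mathcal{N}}{1}$ whenever $\ker(\mathcal{F}\to\Omega)$ is nontrivial.
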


\begin{proof}
	Since $x^{-1}$ is also a topological generator of $\mathcal{C}$, we may assume that $n\geq 1$.
	To prove~\eqref{freeprocenter:eq}, it suffices to show that, for any surjective morphism 
	$\rho:\FinStepSolvQuo{\Omega}{m}\twoheadrightarrow G$ onto a finite group $G$ that factors through the natural projection
	$\FinStepSolvQuo{\Omega}{m}\twoheadrightarrow\FinStepSolvQuo{\Omega}{m-1}$, if we put
	$H\coloneq \GeneSubgrp{\rho(x)}$, then we have
	\begin{equation}\label{gragaerghafw}
		\rho\left(\Centralizer{\FinStepSolvQuo{\Omega}{m}}{x^{n}}\right)\subset H.
	\end{equation}
	Since $\Omega=\mathcal{C}*P$, we have $\Abelian{\Omega}\cong \Abelian{\mathcal{C}}\times \Abelian{P}\cong \mathcal{C}\times \Abelian{P}$.
	In particular, the composition of the natural morphisms
	$\mathcal{C}\to \Omega \to \FinStepSolvQuo{\Omega}{m-1}$ is injective.
	Hence the family of surjections $\rho$ such that $\rho(x^{n})\neq 1$ is cofinal.
	Therefore, we may assume that $\rho(x^{n})\neq 1$ in the above.
	To prove~\eqref{gragaerghafw}, it suffices to show the following condition:
	\begin{quote}\it
	there exists a finite pro-$\Sigma$ group $\tilde{G}$ and a factorization
	\begin{equation}\label{desiredfactrization}
		\vcenter{
\xymatrix{ 
	&\FinStepSolvQuo{\Omega}{m}\ar[d]^-{\rho}\ar[ld]_-{\psi}\\ \tilde{G}\ar[r]^{\mathrm{pr}_{G}}&G 
}
}
	\end{equation}	
	of $\rho$ such that, if we write $\tau\coloneq \psi(x)$, then 
	\begin{equation}\label{hthetharwga}
		\mathrm{pr}_{G}\bigl(\Centralizer{\tilde{G}}{\tau^{n}}\bigr)\subset H.
	\end{equation}
\end{quote}
	Indeed, condition~\eqref{gragaerghafw} follows from
	\[
		\rho\bigl(\Centralizer{\FinStepSolvQuo{\Omega}{m}}{x^{n}}\bigr)
		=(\mathrm{pr}_{G}\circ\psi)\bigl(\Centralizer{\FinStepSolvQuo{\Omega}{m}}{x^{n}}\bigr)
		\subset \mathrm{pr}_{G}\bigl(\Centralizer{\tilde{G}}{\tau^{n}}\bigr)
		\subset H.
	\]

	Let us construct the desired group $\tilde{G}$ and morphism
	$\psi:\FinStepSolvQuo{\Omega}{m}\to \tilde{G}$.
	Let $s$ be the order of $\rho(x)$ in $G$.
	Since $\rho(x^{n})\neq 1$, we have $s\nmid n$.
	Since $G$ is a finite $\Sigma$-group, there exists $\ell\in\Sigma$ such that
	$r\coloneq\ord_{\ell}(s)>\ord_{\ell}(n)$.
	Put $N\coloneq \ell^r$.
	In particular, $s=0$ and $n\neq0$ in $\mathbb{Z}/N\mathbb{Z}$.
	Let $G\times H$ act on the free $\mathbb{Z}/N\mathbb{Z}$-module
	$(\mathbb{Z}/N\mathbb{Z})[G/H]$ through the natural left action of the first factor $G$ on the set of left cosets $G/H$; the second factor $H$ acts trivially.
	Define 
	\[
	\tilde{G}\coloneq (\mathbb{Z}/N\mathbb{Z})[G/H]\rtimes (G\times H).
	\]
	Define $\tau\coloneq ([H],(\rho(x),\rho(x)))\in \tilde{G}$.
	Since $\rho(x)$ fixes the coset $H$, we have
	\[
		\tau^{s}=(s[H],(\rho(x)^{s},\rho(x)^{s}))=(0,(1,1)),
	\]
	which is the identity element of $\tilde{G}$.
	Hence we obtain a morphism $H\to \tilde{G}$ by sending $\rho(x)$ to $\tau$.
	Moreover, the composite
	$\mathcal{C}\to\Omega\to\FinStepSolvQuo{\Omega}{m}\xrightarrow{\rho}G$
	factors through $H$, because $\mathcal{C}$ is topologically generated by $x$.
	Thus we have the following commutative diagram:
	\[
	\xymatrix{
		\Omega \ar[r]& \FinStepSolvQuo{\Omega}{m}\ar[r]^-{\rho}& G\\
		\mathcal{C}\ar[u]\ar[r]& H\ar[ur]\ar[r]& \tilde{G} \ar[u]_-{\mathrm{pr}_{G}}
	}
	\]
	Moreover, we define $P\to \tilde{G}$ by
	$p\mapsto (0,(\rho(p),1))$, where $\rho(p)$ denotes the image of $p$ under
	$P\to\Omega\to\FinStepSolvQuo{\Omega}{m}\xrightarrow{\rho}G$.
	Since $\tilde{G}$ is a finite pro-$\Sigma$ group, the universal property of the free pro-$\Sigma$ product induces a morphism
	$\Omega\to \tilde{G}$.
	Since $G$ is an $(m-1)$-step solvable group, $H$ is abelian, and
	$(\mathbb{Z}/N\mathbb{Z})[G/H]$ is abelian, the group $\tilde{G}$ is an $m$-step solvable group.
	Hence the morphism $\Omega\to \tilde{G}$ induces a morphism
	$\psi : \FinStepSolvQuo{\Omega}{m}\to \tilde{G}$.
	By construction, $\psi(x)=\tau$ and $\rho=\mathrm{pr}_{G}\circ\psi$.
	The morphism $\psi$, the group $\tilde{G}$, and the projection $\mathrm{pr}_{G}$ satisfy the desired condition~\eqref{hthetharwga} by Lemma~\ref{groupcl}.
	This completes the proof.
\end{proof}

\subsection{Proof of the slimness of free \texorpdfstring{$m$}{m}-step solvable groups}

\subsubsection{}
Using the above ingredients, we compute explicitly the centralizer of a free generator in a (possibly infinitely generated) free $m$-step solvable pro-$\Sigma$ group and deduce the slimness of such profinite groups.

\begin{lemma}\label{lem:non-zero-div-free}
	Let $\mathcal{F}$ be a free pro-$\Sigma$ group of finite rank $r$ with free generating set $X$.
	For any nonzero integer $n\in\mathbb{Z}$ and any $x\in X$, the element $\underline{x}^n-1$ is a nonzero divisor in $\mathbb{Z}_{\Sigma}[[\Abelian{\mathcal{F}}]]$, where $\underline{x}$ is the image of $x$ in $\Abelian{\mathcal{F}}$.
\end{lemma}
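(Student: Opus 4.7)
The plan is to reduce the statement to a standard fact about formal power series. Choose a free generating set $X = \{x_1, \ldots, x_r\}$ and, renaming if necessary, suppose $x = x_i$. Since $\mathcal{F}$ is a free pro-$\Sigma$ group of rank $r$, its abelianization is topologically isomorphic to the free $\mathbb{Z}_{\Sigma}$-module $\mathbb{Z}_{\Sigma}^{r}$, with topological basis $\underline{x_1}, \ldots, \underline{x_r}$. The completed group algebra $\mathbb{Z}_{\Sigma}[[\Abelian{\mathcal{F}}]]$ is therefore identified, via the substitution $\underline{x_j} \mapsto 1 + T_j$, with the formal power series ring $\mathbb{Z}_{\Sigma}[[T_1, \ldots, T_r]]$; this is the standard Iwasawa-theoretic description of the completed group algebra of a finitely generated free $\mathbb{Z}_{\Sigma}$-module.

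Next, using the factorization $\mathbb{Z}_{\Sigma} = \prod_{\ell \in \Sigma} \mathbb{Z}_{\ell}$, the power series ring splits as
$$\mathbb{Z}_{\Sigma}[[T_1, \ldots, T_r]] \cong \prod_{\ell \in \Sigma} \mathbb{Z}_{\ell}[[T_1, \ldots, T_r]].$$
Each factor is a Noetherian regular local ring, in particular an integral domain, so every nonzero element of a factor is a nonzero divisor there. An element of the product is a nonzero divisor if and only if each of its components is nonzero, so it suffices to verify that the image of $\underline{x}^n - 1$ in each $\mathbb{Z}_{\ell}[[T_1, \ldots, T_r]]$ is nonzero. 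After the identification this image equals $(1 + T_i)^n - 1$, whose lowest-degree homogeneous part is $n T_i$; since $\mathbb{Z}_{\ell}$ is torsion free and $n \neq 0$, we have $n T_i \neq 0$, and therefore the image is nonzero.

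I do not anticipate any real obstacle: the argument is essentially bookkeeping once the identification $\mathbb{Z}_{\Sigma}[[\Abelian{\mathcal{F}}]] \cong \mathbb{Z}_{\Sigma}[[T_1, \ldots, T_r]]$ is in place. The only subtle point worth flagging is that $\mathbb{Z}_{\Sigma}$ itself need not be an integral domain when $\lvert\Sigma\rvert \geq 2$, which is exactly why one has to pass to the $\ell$-components before invoking the integrality of a power series ring over a complete discrete valuation ring.
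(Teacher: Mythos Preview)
Your argument is correct when $\Sigma$ consists of a single prime, but the identification $\mathbb{Z}_{\Sigma}[[\Abelian{\mathcal{F}}]] \cong \mathbb{Z}_{\Sigma}[[T_1, \ldots, T_r]]$ fails once $|\Sigma| \geq 2$. The Iwasawa isomorphism $\mathbb{Z}_\ell[[\mathbb{Z}_\ell^r]] \cong \mathbb{Z}_\ell[[T_1, \ldots, T_r]]$ hinges on the convergence $(1+T)^{\ell^n} \to 1$ in the $(\ell, T)$-adic topology, and this has no analogue for mixed primes. After splitting coefficients you do obtain $\mathbb{Z}_{\Sigma}[[\mathbb{Z}_\Sigma^r]] \cong \prod_{\ell \in \Sigma} \mathbb{Z}_\ell[[\mathbb{Z}_\Sigma^r]]$, but the $\ell$-th factor is the completed $\mathbb{Z}_\ell$-group algebra of the \emph{whole} group $\mathbb{Z}_\Sigma^r$, not of $\mathbb{Z}_\ell^r$: the prime-to-$\ell$ part of the group does not disappear when you change coefficients. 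Concretely, take $\Sigma = \{p, q\}$ and $r = 1$; then
\[
\mathbb{Z}_p[[\mathbb{Z}_p \times \mathbb{Z}_q]]
\;\cong\;\varprojlim_{n} \mathbb{Z}_p[[T]][\mathbb{Z}/q^n\mathbb{Z}]
\;\cong\;\prod_{k \geq 0}\bigl(\mathbb{Z}_p[s]/\Phi_{q^k}(s)\bigr)[[T]],
\]
an infinite product with infinitely many nontrivial idempotents---certainly not the domain $\mathbb{Z}_p[[T]]$. So the step ``each factor is an integral domain'' is applied to the wrong ring.

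The paper avoids the Iwasawa isomorphism entirely: it writes $\Abelian{\mathcal{F}} \cong H \times \mathbb{Z}_\Sigma$ with $H$ generated by $X \setminus \{x\}$, sets $A = \mathbb{Z}_\Sigma[[H]]$, expresses $\mathbb{Z}_{\Sigma}[[\Abelian{\mathcal{F}}]] \cong \varprojlim_{N} A[C_N]$ over $\Sigma$-numbers $N$, and runs an explicit coefficient comparison in each finite level $A[C_N]$, concluding via $\bigcap_k kA = 0$. Your strategy can be rescued by pushing the product decomposition one step further (each $\mathbb{Z}_\ell[[\mathbb{Z}_\Sigma^r]]$ is itself a product of power-series rings over finite \'etale $\mathbb{Z}_\ell$-algebras, and one checks the image of $\underline{x}^n-1$ is nonzero in every such domain factor), but as written the single-prime Iwasawa identification is a genuine gap.
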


\begin{proof}
	Denote by $\mathbb{Z}(\Sigma)_{\geq 1}$ the set of all positive integers whose prime factors lie in $\Sigma$.
	We may assume that $n\geq 1$ since $\underline{x}^{-n}-1=-\underline{x}^{-n}(\underline{x}^{\,n}-1)$ in $\mathbb{Z}_{\Sigma}[[\Abelian{\mathcal{F}}]]$.
	We show that if $y \in \mathbb{Z}_{\Sigma}[[\Abelian{\mathcal{F}}]]$ satisfies $(\underline{x}^n-1)y=0$, then $y=0$.

	Since $\Abelian{\mathcal{F}}$ is a free $\mathbb{Z}_{\Sigma}$-module of finite rank $r$, we may identify
	\begin{equation*}
		\Abelian{\mathcal{F}}\, \cong\, H \times \mathbb{Z}_{\Sigma},
	\end{equation*}
	where $H\cong \mathbb{Z}_{\Sigma}^{r-1}$ is the free abelian factor generated by the images of $X\setminus\{x\}$, and the factor $\mathbb{Z}_{\Sigma}$ corresponds to $\underline{x}$.
	Put
	\begin{equation*}
		A \coloneq \mathbb{Z}_{\Sigma}[[H]].
	\end{equation*}
	For each $N\in\mathbb{Z}(\Sigma)_{\geq 1}$, let $C_N\coloneq \langle \underline{x}_{N} \mid (\underline{x}_{N})^N=1\rangle\cong \mathbb{Z}/N\mathbb{Z}$.
	Then, by the definition of the completed group algebra and the above decomposition, we have
	\begin{equation*}
		\mathbb{Z}_{\Sigma}[[\Abelian{\mathcal{F}}]]\, \cong\, \varprojlim_{N\in\mathbb{Z}(\Sigma)_{\geq 1}} A[C_N].
	\end{equation*}
	Here, we may regard $\underline{x}$ as the projective limit of $(\underline{x}_{N})_{N}$.

	Write $y_N$ for the image of $y$ in $A[C_N]$.
	Since $\{1,\underline{x}_{N},\cdots,(\underline{x}_{N})^{N-1}\}$ is an $A$-basis of $A[C_N]$, there exists a unique $c_i^{(N)}\in A$ such that
	\begin{equation*}
		y_N=\sum_{i=0}^{N-1} c_i^{(N)} \underline{x}_{N}^{i}.
	\end{equation*}
	The equation $(\underline{x}^{n}-1)y=0$ implies $((\underline{x}_{N})^n-1)y_N=0$ for any $N$, and hence
	\begin{equation*}
		0=((\underline{x}_{N})^n-1)\left(\sum_{i=0}^{N-1} c_i^{(N)} \underline{x}_{N}^{i}\right)
		=\sum_{i=0}^{N-1} \bigl(c_{i-n}^{(N)}-c_i^{(N)}\bigr)\underline{x}_{N}^{i},
	\end{equation*}
	where the indices $i$ of $c_i^{(N)}$ are taken in $\mathbb{Z}/N\mathbb{Z}$.
	By $A$-linear independence of $\{\underline{x}_{N}^i\}$, we obtain $c_{i-n}^{(N)}=c_i^{(N)}$ for each $i\in\mathbb{Z}/N\mathbb{Z}$.
	In other words, the coefficients $c_i^{(N)}$ are constant on cosets of the subgroup $\langle n\rangle\subset \mathbb{Z}/N\mathbb{Z}$.

	Let $n=n_\Sigma\cdot n_{\Sigma'}$ be the unique decomposition such that $n_\Sigma\in\mathbb{Z}(\Sigma)_{\geq 1}$ and that $n_{\Sigma'}$ is coprime to all primes in $\Sigma$.
	Fix $M\in\mathbb{Z}(\Sigma)_{\geq 1}$ such that $n_\Sigma\mid M$, and let $k\in\mathbb{Z}(\Sigma)_{\geq 1}$ be arbitrary.
	As $n_{\Sigma'}$ and $kM$ are coprime to each other, we have $\langle n\rangle=\langle n_\Sigma\rangle\subset \mathbb{Z}/kM\mathbb{Z}$.
	Hence we may apply the above result with $N=kM$, which gives $c_{i-n_\Sigma}^{(kM)}=c_i^{(kM)}$ for each $i\in\mathbb{Z}/kM\mathbb{Z}$.
	Therefore, by $n_\Sigma\mid M$, we obtain
	\begin{equation}\label{eq1_lem:non-zero-div-free}
		c_{i}^{(kM)}=c_{i+M}^{(kM)}=\cdots=c_{i+(k-1)M}^{(kM)}
	\end{equation}
	for each $i\in\mathbb{Z}/kM\mathbb{Z}$.
	Let $\pi:A[C_{kM}]\to A[C_M],\ \underline{x}_{kM}\mapsto \underline{x}_M$, be the natural projection induced by $\mathbb{Z}/kM\mathbb{Z}\twoheadrightarrow \mathbb{Z}/M\mathbb{Z}$.
	By $\pi(\underline{x}_{kM})=\underline{x}_M$ and~\eqref{eq1_lem:non-zero-div-free}, we have
	\begin{equation*}
		y_M=\pi(y_{kM})
		=\sum_{i=0}^{kM-1} c_i^{(kM)}\,\pi\bigl(\underline{x}_{kM}^i\bigr)
		=\sum_{i=0}^{M-1}\Bigl(\sum_{j=0}^{k-1} c_{i+jM}^{(kM)}\Bigr)\underline{x}_M^{i}=\sum_{i=0}^{M-1}\bigl(k\cdot c_{i}^{(kM)}\bigr)\underline{x}_M^{i}.
	\end{equation*}
	Comparing this with $y_M=\sum_{i=0}^{M-1} c_i^{(M)}\underline{x}_M^{i}$, we obtain
	\begin{equation*}
		c_i^{(M)}=k\cdot c_{i}^{(kM)}\in kA
	\end{equation*}
	for each $i\in \mathbb{Z}/M\mathbb{Z}$.
	By running over all $k\in\mathbb{Z}(\Sigma)_{\geq 1}$ and using the fact $\bigcap_{k} kA=\{0\}$, we obtain $y_M=0$.
	Since the set $\{M\in\mathbb{Z}(\Sigma)_{\geq 1}\mid n_\Sigma\mid M\}$ is cofinal in $\mathbb{Z}(\Sigma)_{\geq 1}$, it follows that $y=0$.
	This completes the proof.
\end{proof}

\begin{theorem}\label{thm_center-free_freegroup}
	Let $\mathcal{F}$ be a (possibly infinitely generated) free pro-$\Sigma$ group of rank $r$ with free generating set $X$.
	Let $m\in\mathbb{Z}_{\geq 2}$.
	Then, for any nonzero integer $n\in\mathbb{Z}$ and any $x\in X$, we have
	\begin{equation*}
		\Centralizer{\FinStepSolvQuo{\mathcal{F}}{m}}{x^n} = \GeneSubgrpTop{x}.
	\end{equation*}
\end{theorem}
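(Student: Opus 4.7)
The inclusion $\GeneSubgrpTop{x}\subset\Centralizer{\FinStepSolvQuo{\mathcal{F}}{m}}{x^{n}}$ is immediate, so only the reverse inclusion needs proof. The plan is to first reduce to the case of finite rank: writing $\FinStepSolvQuo{\mathcal{F}}{m}$ as the cofiltered limit of $\FinStepSolvQuo{\mathcal{F}_{Y}}{m}$ over the finite subsets $Y\subseteq\mathcal{X}$ containing $x$, one checks that both the closed centralizer of $x^{n}$ and the closed procyclic subgroup $\GeneSubgrpTop{x}$ are compatible with this limit. So I assume henceforth that $\mathcal{X}$ is finite of cardinality $r$, and write $\mathcal{F}=\GeneSubgrpTop{x}\ast\mathcal{F}_{0}$ as a free pro-$\Sigma$ product, with $\mathcal{F}_{0}$ the free pro-$\Sigma$ group on $\mathcal{X}\setminus\{x\}$. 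For any $y\in\Centralizer{\FinStepSolvQuo{\mathcal{F}}{m}}{x^{n}}$, Proposition~\ref{freeprocenter} applied to $\Omega=\mathcal{F}$ then yields a decomposition $y=x^{a}\cdot z$ with $a\in\mathbb{Z}_{\Sigma}$ and $z\in M\coloneq\DerivSeries{(\FinStepSolvQuo{\mathcal{F}}{m})}{m-1}$; it will suffice to show $z=1$.

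Set $Q\coloneq\FinStepSolvQuo{\mathcal{F}}{m-1}$ and let $t\in Q$ denote the image of $x$. Since $y$ and $x^{a}$ both commute with $x^{n}$, so does $z$. Identifying $M=\DerivSeries{\mathcal{F}}{m-1}/\DerivSeries{\mathcal{F}}{m}$ with $\Abelian{\DerivSeries{\mathcal{F}}{m-1}}$ and noting that the conjugation action of $\FinStepSolvQuo{\mathcal{F}}{m}$ on this abelian group factors through $Q$, the relation $x^{n}zx^{-n}=z$ becomes the $\mathbb{Z}_{\Sigma}[[Q]]$-module equation $(t^{n}-1)\cdot z=0$. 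Applying the Blanchfield--Lyndon exact sequence (Proposition~\ref{bLtheory}) to $\mathcal{F}$ with $\mathcal{N}\coloneq\DerivSeries{\mathcal{F}}{m-1}$ produces an injection of $\mathbb{Z}_{\Sigma}[[Q]]$-modules $\iota\colon M\hookrightarrow\mathbb{Z}_{\Sigma}[[Q]]^{\oplus r}$, so that $(t^{n}-1)\cdot\iota(z)=0$ in $\mathbb{Z}_{\Sigma}[[Q]]^{\oplus r}$; the theorem thus reduces to showing that $t^{n}-1$ is a non-zero divisor in $\mathbb{Z}_{\Sigma}[[Q]]$.

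For $m=2$ one has $Q=\Abelian{\mathcal{F}}$ and this non-zero-divisor statement is exactly Lemma~\ref{lem:non-zero-div-free}. For $m\geq 3$ it is the main obstacle of the proof. My plan is to observe first that the composition $\GeneSubgrpTop{x}\hookrightarrow\mathcal{F}\twoheadrightarrow Q\twoheadrightarrow\Abelian{\mathcal{F}}$ is injective (the image of $x$ is a free $\mathbb{Z}_{\Sigma}$-generator of $\Abelian{\mathcal{F}}$), so the procyclic subgroup $\GeneSubgrpTop{t}\subset Q$ is isomorphic to $\mathbb{Z}_{\Sigma}$; Lemma~\ref{lem:non-zero-div-free} applied to the rank-one free pro-$\Sigma$ group $\GeneSubgrpTop{t}$ then shows that $t^{n}-1$ is already a non-zero divisor in the subring $\mathbb{Z}_{\Sigma}[[\GeneSubgrpTop{t}]]\subset\mathbb{Z}_{\Sigma}[[Q]]$. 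To transfer this to the larger ring I would invoke the standard fact from the theory of profinite group rings that $\mathbb{Z}_{\Sigma}[[Q]]$ is faithfully flat (in fact free, under mild hypotheses) as a module over $\mathbb{Z}_{\Sigma}[[\GeneSubgrpTop{t}]]$; since flatness preserves the non-zero-divisor property, $t^{n}-1$ remains a non-zero divisor in $\mathbb{Z}_{\Sigma}[[Q]]$. It then follows that $\iota(z)=0$, hence $z=1$, and $y=x^{a}\in\GeneSubgrpTop{x}$, completing the proof.
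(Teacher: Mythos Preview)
Your reduction to finite rank and your treatment of the case $m=2$ are correct and coincide with the paper's argument. The divergence, and the gap, is in the step for $m\geq 3$.

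You apply the Blanchfield--Lyndon sequence with $\mathcal{N}=\DerivSeries{\mathcal{F}}{m-1}$ and correctly reduce to the assertion that $t^{n}-1$ is a non-zero divisor in $\mathbb{Z}_{\Sigma}[[Q]]$ with $Q=\FinStepSolvQuo{\mathcal{F}}{m-1}$. For $m=2$ this is exactly Lemma~\ref{lem:non-zero-div-free}. For $m\geq 3$ you invoke ``the standard fact'' that $\mathbb{Z}_{\Sigma}[[Q]]$ is faithfully flat, or even free, over $\mathbb{Z}_{\Sigma}[[\GeneSubgrpTop{t}]]$. This is not a standard fact. Freeness/projectivity of completed group rings over closed (non-open) subgroup subrings is known under genuine hypotheses---for instance Brumer's theorem for free pro-$p$ groups (cohomological dimension $\leq 1$), or Lazard's theory for $p$-adic analytic groups---but $Q=\FinStepSolvQuo{\mathcal{F}}{m-1}$ with $m\geq 3$ and $r\geq 2$ is neither free pro-$\Sigma$ nor $p$-adic analytic (its derived subgroup is not topologically finitely generated), and no such result is available for it. Without this flatness your argument does not conclude, and the non-zero-divisor statement in $\mathbb{Z}_{\Sigma}[[Q]]$ for noncommutative $Q$ is itself a nontrivial claim that you have not otherwise established.

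The paper circumvents this obstacle entirely by an induction on $m$: for $g\in\Centralizer{\FinStepSolvQuo{\mathcal{F}}{m}}{x^{n}}\cap\DerivSeries{(\FinStepSolvQuo{\mathcal{F}}{m})}{m-1}$ one shows $g\in\DerivSeries{H}{m-1}$ for every open normal $H\supset\DerivSeries{(\FinStepSolvQuo{\mathcal{F}}{m})}{1}$. The preimage $\tilde{H}\subset\mathcal{F}$ is free pro-$\Sigma$ by Nielsen--Schreier and admits a free generating set containing $x^{N}$ (with $N$ the order of $x$ modulo $H$); the inductive hypothesis applied to $\FinStepSolvQuo{\tilde{H}}{m-1}\cong\FinStepSolvQuo{H}{m-1}$ gives $\Centralizer{\FinStepSolvQuo{H}{m-1}}{x^{Nn}}=\GeneSubgrpTop{x^{N}}$, and intersecting with $\DerivSeries{(\FinStepSolvQuo{H}{m-1})}{1}$ forces the image of $g$ to vanish. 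This way one never leaves the abelian Iwasawa algebra $\mathbb{Z}_{\Sigma}[[\Abelian{\mathcal{F}}]]$, where Lemma~\ref{lem:non-zero-div-free} applies, and no flatness over noncommutative completed group rings is needed.
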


\begin{proof}
	If $r=1$, the assertion is clear.
	Hence we may assume $r\neq 1$.
	Fix $x\in X$.
	We divide the proof into three cases: $m=2$ with finite $r$; general $m$ with finite $r$; and the case $r=\infty$.

	First, we assume that $m=2$ and $r$ is finite.
	By Proposition~\ref{freeprocenter} and the fact that $\GeneSubgrpTop{x}\subset \Centralizer{\FinStepSolvQuo{\mathcal{F}}{2}}{x^{n}}$, we obtain
	$\Centralizer{\FinStepSolvQuo{\mathcal{F}}{2}}{x^{n}}=\GeneSubgrpTop{x}\cdot(\Centralizer{\FinStepSolvQuo{\mathcal{F}}{2}}{x^{n}}\cap\DerivSeries{(\FinStepSolvQuo{\mathcal{F}}{2})}{1})$.
	Therefore, it suffices to show that
	\begin{equation}\label{eq1:thm_center-free_freegroup}
		\Centralizer{\FinStepSolvQuo{\mathcal{F}}{2}}{x^{n}}\cap\DerivSeries{(\FinStepSolvQuo{\mathcal{F}}{2})}{1}=1.
	\end{equation}
	Applying Proposition~\ref{bLtheory} to the case $\mathcal{N}=\mathcal{F}^{[1]}$, we obtain an injective $\mathbb{Z}_{\Sigma}[[\Abelian{\mathcal{F}}]]$-linear morphism
	\begin{equation*}
		\iota:\ \DerivSeries{(\FinStepSolvQuo{\mathcal{F}}{2})}{1}\hookrightarrow \mathbb{Z}_{\Sigma}[[\Abelian{\mathcal{F}}]]^{\oplus r}.
	\end{equation*}
	Consider the conjugation action of $x^n$ on the abelian group $\DerivSeries{(\FinStepSolvQuo{\mathcal{F}}{2})}{1}$.
	By $\mathbb{Z}_{\Sigma}[[\Abelian{\mathcal{F}}]]$-linearity of $\iota$, we obtain
	\begin{eqnarray*}
		\Centralizer{\FinStepSolvQuo{\mathcal{F}}{2}}{x^{n}}\cap\DerivSeries{(\FinStepSolvQuo{\mathcal{F}}{2})}{1}&=&\{u\in \DerivSeries{(\FinStepSolvQuo{\mathcal{F}}{2})}{1}\mid x^{n} u x^{-n}=u\}\\
		&=&\ker\bigl((\underline{x}^{n}-1):\DerivSeries{(\FinStepSolvQuo{\mathcal{F}}{2})}{1}\to \DerivSeries{(\FinStepSolvQuo{\mathcal{F}}{2})}{1}\bigr)\\
		&\subset&
		\ker\bigl((\underline{x}^{n}-1):\mathbb{Z}_{\Sigma}[[\Abelian{\mathcal{F}}]]^{\oplus r}\to \mathbb{Z}_{\Sigma}[[\Abelian{\mathcal{F}}]]^{\oplus r}\bigr),
	\end{eqnarray*}
	where $\underline{x}$ is the image of $x$ in $\Abelian{\mathcal{F}}$.
	By Lemma~\ref{lem:non-zero-div-free}, the element $\underline{x}^{n}-1$ is a nonzero divisor in $\mathbb{Z}_{\Sigma}[[\Abelian{\mathcal{F}}]]$, and hence multiplication by $\underline{x}^{n}-1$ is injective.
	Therefore, the last kernel is trivial, and hence the equation~\eqref{eq1:thm_center-free_freegroup} follows.
	This proves
	\begin{equation*}
		\Centralizer{\FinStepSolvQuo{\mathcal{F}}{2}}{x^n}=\GeneSubgrpTop{x}
	\end{equation*}
	in the case where $r$ is finite.

	Next, assume that $r$ is finite and proceed by induction on $m\in \mathbb{Z}_{\geq 2}$.
	The case of $m=2$ is already proved.
	Assume that $m>2$ and that the assertion holds for $m-1$.
	As in the case $m=2$, by Proposition~\ref{freeprocenter}, it suffices to show that
	\begin{equation}\label{eq2:thm_center-free_freegroup}
		\Centralizer{\FinStepSolvQuo{\mathcal{F}}{m}}{x^{n}}\cap\DerivSeries{(\FinStepSolvQuo{\mathcal{F}}{m})}{m-1}=1.
	\end{equation}
	Let $g$ be an element of the left-hand side of~\eqref{eq2:thm_center-free_freegroup}.
	Let $H$ be an open normal subgroup of $\FinStepSolvQuo{\mathcal{F}}{m}$ that contains $\DerivSeries{(\FinStepSolvQuo{\mathcal{F}}{m})}{1}$.
	Since
	\begin{equation*}
		\bigcap_{H} \DerivSeries{H}{m-1}=\DerivSeries{(\DerivSeries{(\FinStepSolvQuo{\mathcal{F}}{m})}{1})}{m-1}=1,
	\end{equation*}
	it suffices to show that $\rho_{H}(g)=1$, i.e., the condition $g\in \DerivSeries{H}{m-1}$ holds, for each such $H$, where $\rho_{H}:H\twoheadrightarrow \FinStepSolvQuo{H}{m-1}$ is the natural surjection.
	The image of $x$ in the finite quotient $\FinStepSolvQuo{\mathcal{F}}{m}/H$ has finite order.
	Let $N$ denote this order.
	Since $g$ commutes with $x^{n}$, it also commutes with $x^{Nn}$, and therefore $\rho_H(g)$ commutes with $\rho_H(x^{Nn})$.
	By the Nielsen--Schreier theorem, the inverse image $\tilde{H}$ of $H$ in $\mathcal{F}$ is again a free pro-$\Sigma$ group, and we may choose a free generating set of $\tilde{H}$ that contains $x^N$.
	By Lemma~\ref{lemma:mstep}, we have $\FinStepSolvQuo{H}{m-1}\cong \FinStepSolvQuo{\tilde{H}}{m-1}$.
	Applying the induction hypothesis for $m-1$ to $\FinStepSolvQuo{\tilde{H}}{m-1}$ and the basis element $x^N\in\FinStepSolvQuo{\tilde{H}}{m-1}$, we obtain
	\begin{equation*}
		\Centralizer{\FinStepSolvQuo{H}{m-1}}{(x^N)^n}=\GeneSubgrpTop{x^N}.
	\end{equation*}
	On the other hand, we have $g\in (\FinStepSolvQuo{\mathcal{F}}{m})^{[m-1]}\subset (\FinStepSolvQuo{\mathcal{F}}{m})^{[2]}\subset H^{[1]}$ and hence $\rho_H(g)\in (\FinStepSolvQuo{H}{m-1})^{[1]}$.
	Note that $\GeneSubgrpTop{x^N}$ embeds into $\Abelian{(\FinStepSolvQuo{H}{m-1})}$, whereas $(\FinStepSolvQuo{H}{m-1})^{[1]}$ has trivial image there.
	Therefore,
	\begin{equation*}
		\rho_H(g)\in \GeneSubgrpTop{x^N}\cap (\FinStepSolvQuo{H}{m-1})^{[1]}=1.
	\end{equation*}
	This proves
	\begin{equation*}
		\Centralizer{\FinStepSolvQuo{\mathcal{F}}{m}}{x^n}=\GeneSubgrpTop{x}
	\end{equation*}
	in the case where $r$ is finite.

	Finally, we consider the case $r=\infty$.
	Let $J$ be the directed set of finite subsets $X_j$ of $X$ such that $x\in X_j$.
	For each $j\in J$, let $\mathcal{F}_j$ be the finitely generated free pro-$\Sigma$ group on $X_j$ and let $\pi_j:\mathcal{F}\twoheadrightarrow \mathcal{F}_j$ be the continuous morphism sending generators in $X_j$ to themselves and generators in $X\setminus X_j$ to the identity element of $\mathcal{F}_{j}$.
	Additionally, let $\pi_{j}^{(m)}\colon\FinStepSolvQuo{\mathcal{F}}{m}\twoheadrightarrow \FinStepSolvQuo{\mathcal{F}_j}{m}$ be the natural projection induced from $\pi_{j}$.
	Then, by~\cite[Proposition 3.3.9]{MR2599132}, we have an isomorphism
	\begin{equation*}
		\FinStepSolvQuo{\mathcal{F}}{m}\, \xrightarrow{\sim}\, \varprojlim_{j\in J}\FinStepSolvQuo{\mathcal{F}_j}{m}.
	\end{equation*}
	Let $g\in \Centralizer{\FinStepSolvQuo{\mathcal{F}}{m}}{x^n}$.
	For each $j$, by the finite-rank case we obtain $\pi_j^{(m)}(g)\in \GeneSubgrpTop{\pi^{(m)}_j(x)}$.
	Passing to the inverse limit, we conclude that $g\in \GeneSubgrpTop{x}$.
	Thus the equality $\Centralizer{\FinStepSolvQuo{\mathcal{F}}{m}}{x^n}=\GeneSubgrpTop{x}$ also holds when $r=\infty$.
	This completes the proof.
\end{proof}

We say that a profinite group $G$ is \textit{slim} if the centralizer $\Centralizer{G}{H}$ of each open subgroup $H\subset G$ in $G$ is trivial (see~\cite[Definition 0.1]{MR2059759}).
We note that slimness implies center-freeness.

\begin{corollary}\label{cor_slim_free}
	Let $\mathcal{F}$ be a (possibly infinitely generated) free pro-$\Sigma$ group of rank $r$.
	Assume $r\geq 2$.
	Then, for any $m\in\mathbb{Z}_{\geq 2}$, the quotient $\FinStepSolvQuo{\mathcal{F}}{m}$ is slim.
\end{corollary}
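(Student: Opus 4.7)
The plan is to derive slimness directly from Theorem~\ref{thm_center-free_freegroup}, together with the observation that distinct free generators remain $\mathbb{Z}_{\Sigma}$-linearly independent in the abelianization.

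Set $G\coloneq \FinStepSolvQuo{\mathcal{F}}{m}$ and fix a free generating set $X$ of $\mathcal{F}$ (so $|X|=r\geq 2$). It suffices to show that for every open subgroup $H\subset G$ and every $g\in \Centralizer{G}{H}$, we have $g=1$. For each $x\in X$, write $\bar{x}$ for the image of $x$ in $G$ and let $N_{x}\in \mathbb{Z}_{\geq 1}$ denote the (finite) order of $\bar{x}$ in the finite quotient $G/H$. Since $\bar{x}^{N_{x}}\in H$, the element $g$ commutes with $\bar{x}^{N_{x}}$ in $G$.

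Since $N_{x}\neq 0$, Theorem~\ref{thm_center-free_freegroup} applied to $x\in X$ and $n=N_{x}$ yields
\begin{equation*}
	g \in \Centralizer{G}{\bar{x}^{N_{x}}} = \GeneSubgrpTop{\bar{x}}.
\end{equation*}
Letting $x$ vary over $X$, we obtain $g\in \bigcap_{x\in X}\GeneSubgrpTop{\bar{x}}$.

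Finally, since $r\neq 1$, I would pick two distinct elements $x,y\in X$ and write $g=\bar{x}^{a}=\bar{y}^{b}$ for some $a,b\in \mathbb{Z}_{\Sigma}$. Passing to the abelianization, $\Abelian{G}=\Abelian{\mathcal{F}}$ is the free pro-$\Sigma$ abelian group on $X$, so the images of $\bar{x}$ and $\bar{y}$ form part of a topological basis. Hence the equality $a\cdot \bar{x}=b\cdot \bar{y}$ in $\Abelian{G}$ forces $a=b=0$, and therefore $g=1$.

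The only point that deserves a moment of care is the injectivity of the composition $\GeneSubgrpTop{\bar{x}}\hookrightarrow G \twoheadrightarrow \Abelian{G}$, but this is precisely the injectivity observation used in the proof of Proposition~\ref{freeprocenter} (applied by viewing $\mathcal{F}$ as the free pro-$\Sigma$ product of the procyclic group on $x$ and the free pro-$\Sigma$ group on $X\setminus\{x\}$). Once this is in hand, the whole argument is a short chain of implications and there is no substantive obstacle.
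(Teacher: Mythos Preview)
Your proof is correct and follows essentially the same route as the paper: apply Theorem~\ref{thm_center-free_freegroup} to powers of two distinct generators lying in $H$, then use that $\GeneSubgrpTop{x}$ and $\GeneSubgrpTop{x'}$ inject into $\Abelian{\mathcal{F}}$ with trivial intersection. One cosmetic point: since $H$ need not be normal, $G/H$ is only a coset space, so it is cleaner to define $N_{x}$ as the smallest positive integer with $\bar{x}^{N_{x}}\in H$ (which exists because $[G:H]<\infty$) rather than as an ``order in $G/H$''.
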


\begin{proof}
	Let $X$ be a free generating set of $\mathcal{F}$.
	Let $H$ be an open subgroup of $\FinStepSolvQuo{\mathcal{F}}{m}$, and take two distinct elements $x,x'\in X$.
	Since $H$ is open, there exist $n,n'\geq 1$ such that $x^{n}\in H$ and $(x')^{n'}\in H$.
	Then Theorem~\ref{thm_center-free_freegroup} implies
	\begin{equation*}
		\Centralizer{\FinStepSolvQuo{\mathcal{F}}{m}}{H}
		\subset\Centralizer{\FinStepSolvQuo{\mathcal{F}}{m}}{x^n}\cap\Centralizer{\FinStepSolvQuo{\mathcal{F}}{m}}{(x')^{n'}}
		=\GeneSubgrpTop{x}\cap\GeneSubgrpTop{x'}
		=1,
	\end{equation*}
	where the last equality follows from the facts that $\GeneSubgrpTop{x}$ and $\GeneSubgrpTop{x'}$ embed into the abelianization $\Abelian{\mathcal{F}}$ and are distinct.
	This completes the proof.
\end{proof}

\section{The \texorpdfstring{$m$}{m}-step solvable Grothendieck conjecture}\label{section2}

In this section, we show that the maximal $m$-step solvable quotients of the geometric \'etale and tame fundamental groups of hyperbolic curves over a field are center-free (see Theorem~\ref{mainthm_center_free_fund}).
Moreover, we relate this result to the Grothendieck conjecture.
Throughout this section, let $\Sigma$ be a non-empty set of prime numbers.
For any profinite group $G$, we write $G^{\Sigma}$ for the maximal pro-$\Sigma$ quotient of $G$.

\subsection{Ab-torsion-freeness and ab-faithfulness}

\subsubsection{}
In this subsection, we introduce ab-torsion-freeness and ab-faithfulness for profinite groups, and record a strategy for proving center-freeness of maximal $m$-step solvable quotients.

\begin{definition}[{\cite[Definition~1.1]{Mochizuki2009CommentsStronglyTorsionFree}}]
	\label{defabfaithfulabtorsionfree}
	Let $G$ be a profinite group.
	\begin{enumerate}[(1)]
		\item\label{defabfaithfulabtorsionfree1}
		      We say that $G$ is \textit{ab-torsion-free} if, for each open subgroup $H$ of $G$, the abelianization $\Abelian{H}$ is torsion-free.
		\item\label{defabfaithfulabtorsionfree2}
		      We say that $G$ is \textit{ab-faithful} if, for each open subgroup $H$ of $G$ and each open normal subgroup $N$ of $H$, the natural morphism
		      \begin{equation*}
			      H/N \to \Aut\bigl(\Abelian{N}\bigr)
		      \end{equation*}
		      induced by conjugation is injective.
	\end{enumerate}
\end{definition}

\begin{remark}\label{rem:abfaithfulabtorsionfree}
	Let $G$ be a profinite group and let $m\in\mathbb{Z}_{\geq 2}$.
	For any open subgroup $P$ of $\FinStepSolvQuo{G}{m}$ that contains $\DerivSeries{(\FinStepSolvQuo{G}{m})}{m-1}$, let $\tilde{P}\subset G$ be its inverse image under $G\twoheadrightarrow \FinStepSolvQuo{G}{m}$.
	Then the natural morphism $\Abelian{\tilde{P}}\to \Abelian{P}$ is an isomorphism by Lemma~\ref{lemma:mstep}.
	In particular, the following hold:
	\begin{enumerate}[(i)]
		\item\label{rem:abfaithfulabtorsionfree3}
		      Assume that $G$ is ab-torsion-free, and let $H$ be an open subgroup of $\FinStepSolvQuo{G}{m}$.
		      If $\DerivSeries{(\FinStepSolvQuo{G}{m})}{m-1}\subset H$, then $\Abelian{H}$ is torsion-free.
		\item\label{rem:abfaithfulabtorsionfree4}
		      Assume that $G$ is ab-faithful.
		      Let $H$ be an open subgroup of $\FinStepSolvQuo{G}{m}$ and $N$ an open normal subgroup of $H$.
		      If $\DerivSeries{(\FinStepSolvQuo{G}{m})}{m-1}\subset N$, then the conjugation action of $H/N$ on $\Abelian{N}$ is also faithful.
	\end{enumerate}
	In what follows, we often only need these properties for open subgroups that contain $\DerivSeries{(\FinStepSolvQuo{G}{m})}{m-1}$.
\end{remark}

\begin{lemma}\label{abtorfreepropertieslemma}
	Let $G$ be an ab-torsion-free profinite group.
	\begin{enumerate}[(1)]
		\item\label{abtorfreepropertieslemma0}
		      Any closed subgroup of $G$ is ab-torsion-free.
		\item\label{abtorfreepropertieslemma1}
		      $G$ is torsion-free.
		\item\label{abtorfreepropertieslemma2}
		      For any $m\in\mathbb{Z}_{\geq 1}$, the quotient $\FinStepSolvQuo{G}{m}$ is torsion-free.
		\item\label{abtorfreepropertieslemma3}
		      Let $K$ be a closed normal subgroup of $G$ that is contained in $\DerivSeries{G}{1}$.
		      Then the conjugation action of $G/K$ on $\Abelian{K}$ has no nontrivial fixed points.
	\end{enumerate}
\end{lemma}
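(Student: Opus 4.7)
The plan is to prove (1) by a profinite approximation argument, then deduce (2) and (3) as quick consequences, and finally tackle (4), which I expect to be by far the most delicate step.

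For (1), take a closed subgroup $K\subseteq G$ and $x\in K$ with $x^{n}\in\overline{[K,K]}$ for some $n\geq 1$; I want to conclude $x\in\overline{[K,K]}$. The key observation is that for every open normal subgroup $U\trianglelefteq G$ the product $KU$ is open in $G$, hence $(KU)^{\ab}$ is torsion-free by the ab-torsion-free hypothesis. Since $x^{n}\in\overline{[K,K]}\subseteq\overline{[KU,KU]}$ and $(KU)^{\ab}$ is torsion-free, $x\in\overline{[KU,KU]}$. I then reduce to showing
\begin{equation*}
\bigcap_{U\trianglelefteq G\text{ open}}\overline{[KU,KU]}=\overline{[K,K]}.
\end{equation*}
For $y$ in the intersection, reducing modulo each $U$ shows $\bar y\in G/U$ lies in $[KU/U,KU/U]=([K,K]\cdot U)/U$, so $y\in[K,K]\cdot U$ for every $U$; intersecting these closed subsets over all open normal $U$ recovers $\overline{[K,K]}$.

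Part (2) drops out by applying (1) to $K=\overline{\langle g\rangle}$, which is procyclic and hence equal to its own abelianization. For (3), I induct on $m$: the base case $m=1$ is (1) with $K=G$, and for $m\geq 2$ the short exact sequence
\begin{equation*}
1\to \DerivSeries{G}{m-1}/\DerivSeries{G}{m}\to \FinStepSolvQuo{G}{m}\to \FinStepSolvQuo{G}{m-1}\to 1
\end{equation*}
has kernel $(\DerivSeries{G}{m-1})^{\ab}$, torsion-free by (1) applied to $K=\DerivSeries{G}{m-1}$, and torsion-free quotient by induction; any torsion element of the middle group projects to $0$ in the quotient, hence lies in the torsion-free kernel, hence is $0$.

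For (4), suppose a nontrivial $\bar x\in\DerivSeries{G}{m-1}/\DerivSeries{G}{m}$ is fixed by the conjugation action of $\FinStepSolvQuo{G}{m-1}$; equivalently, $\bar x$ is central in $\FinStepSolvQuo{G}{m}$. Lifting to $x_{0}\in\DerivSeries{G}{m-1}$, centrality gives $[g,x_{0}]\in\DerivSeries{G}{m}$ for every $g\in G$, so $\tilde L\coloneq\overline{\langle x_{0}\rangle}\cdot\DerivSeries{G}{m}$ is a closed normal subgroup of $G$ with $[\tilde L,\tilde L]\subseteq\DerivSeries{G}{m}$. By (1), $\tilde L^{\ab}$ is torsion-free and surjects onto the procyclic group $\overline{\langle\bar x\rangle}$; combining this with the torsion-freeness of $\FinStepSolvQuo{G}{m-1}$ from (3) and applying (1) to enlargements of $\tilde L$ by individual elements of $G$ should force $\bar x\in\DerivSeries{G}{m}$, the desired contradiction. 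The hard part will be (4): bare ab-torsion-freeness does not yield a faithful conjugation action of $\FinStepSolvQuo{G}{m-1}$ on $\DerivSeries{(\FinStepSolvQuo{G}{m})}{m-1}$ (for $G=F\times\mathbb{Z}_{p}$ with $F$ free pro-$p$ nonabelian, $G$ is ab-torsion-free yet the central $\mathbb{Z}_{p}$-factor acts trivially), so ``fixed-point-free'' must be read as the vanishing of global invariants, and deducing this vanishing from (1)-(3) requires careful use of the commutator structure rather than a direct module-theoretic short-cut.
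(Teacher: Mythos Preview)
Your proofs of (1)--(3) are correct and match the paper's strategy: approximate $K$ by the open subgroups $KU$ (the paper phrases this as $K^{\ab}\cong\varprojlim_{H\supseteq K}H^{\ab}$, but the content is the same), deduce (2) from (1) applied to a procyclic subgroup, and obtain (3) by the short exact sequence with kernel $(\DerivSeries{G}{m-1})^{\ab}$.

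Your argument for (4), however, has a genuine gap. After forming $\tilde L=\overline{\langle x_{0}\rangle}\cdot\DerivSeries{G}{m}$ and noting $\tilde L^{\ab}$ is torsion-free, you write that ``applying (1) to enlargements of $\tilde L$ by individual elements of $G$ should force $\bar x\in\DerivSeries{G}{m}$''. This step does not go through: for any $g\in G$ the enlargement $M_{g}=\overline{\langle g\rangle}\cdot\tilde L$ still satisfies $\overline{[M_{g},M_{g}]}\subseteq\DerivSeries{G}{m}$, so $M_{g}^{\ab}$ surjects onto the abelian group $\overline{\langle\bar g,\bar x\rangle}\subseteq\FinStepSolvQuo{G}{m}$, and torsion-freeness of that image is already known from (3). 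No contradiction arises, and there is no mechanism in your outline that forces $\bar x$ to be trivial rather than merely to generate a torsion-free central procyclic subgroup.

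The paper supplies the missing idea via a transfer argument. For each open normal $N\trianglelefteq G$ with $\DerivSeries{G}{m-1}\subseteq N$, the composite $\mathrm{transfer}_{N}\circ R_{N}$ on $N^{\ab}$ equals $\sum_{a\in G/N}(a\text{-conj})$, which restricts to multiplication by $[G:N]$ on the invariants $(N^{\ab})^{G/N}$; since $N^{\ab}$ is torsion-free, this makes the natural map $(N^{\ab})^{G/N}\hookrightarrow G^{\ab}$ injective. Passing to the inverse limit over such $N$ gives an injection
\[
\bigl(\DerivSeries{(\FinStepSolvQuo{G}{m})}{m-1}\bigr)^{\FinStepSolvQuo{G}{m-1}}\hookrightarrow G^{\ab}.
\]
But this map factors through the coinvariants $\bigl(\DerivSeries{(\FinStepSolvQuo{G}{m})}{m-1}\bigr)_{\FinStepSolvQuo{G}{m-1}}\to G^{\ab}$, and the latter is the zero map because $G^{\ab}\to(\FinStepSolvQuo{G}{m-1})^{\ab}$ is an isomorphism for $m\geq 2$. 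Hence the invariants vanish. Your closing remark that ``fixed-point-free'' must be read as vanishing of global invariants (and your $F\times\mathbb{Z}_{p}$ example showing faithfulness fails) is exactly right; the transfer argument is what converts ab-torsion-freeness into that vanishing.
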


\begin{proof}
	\noindent\ref{abtorfreepropertieslemma0}
	Let $K$ be a closed subgroup of $G$.
	Since $G$ is profinite, we have
	\begin{equation*}
		K=\cap_{H} H,
	\end{equation*}
	where $H$ runs over all open subgroups of $G$ that contain $K$.
	Since projective limits commute with abelianization, we obtain $K^{\ab}\xrightarrow{\sim}\varprojlim_{H}H^{\ab}$.
	By the hypothesis, the right-hand side is torsion-free.
	Therefore, the group $K^{\ab}$ is also torsion-free.
	By applying the same argument with $K$ replaced by an arbitrary open subgroup of $K$, we conclude that $K$ is ab-torsion-free.

	\noindent\ref{abtorfreepropertieslemma1}
	Let $g\in G$ have finite order. Then the cyclic subgroup $\GeneSubgrp{g}$ is finite and hence closed.
	By~\ref{abtorfreepropertieslemma0}, we obtain $\Abelian{\GeneSubgrp{g}}=\GeneSubgrp{g}$ is torsion-free, hence $g=1$.
	Thus $G$ is torsion-free.

	\noindent\ref{abtorfreepropertieslemma2}
	We prove by induction on $m$.
	If $m=1$, then the assertion follows from the ab-torsion-freeness of $G$.
	Assume $m\ge 2$.
	Then we have the exact sequence:
	\begin{equation*}
		1
		\to \DerivSeries{(\FinStepSolvQuo{G}{m})}{m-1}
		\to \FinStepSolvQuo{G}{m}
		\to \FinStepSolvQuo{G}{m-1}
		\to 1
	\end{equation*}
	By the induction hypothesis, we have that $\FinStepSolvQuo{G}{m-1}$ is torsion-free.
	Moreover, by~\ref{abtorfreepropertieslemma0}, the quotient $\DerivSeries{(\FinStepSolvQuo{G}{m})}{m-1}=\Abelian{(\DerivSeries{G}{m-1})}$ is also torsion-free.
	Therefore $\FinStepSolvQuo{G}{m}$ is torsion-free.

	\noindent\ref{abtorfreepropertieslemma3}
	Let $N$ be an open normal subgroup of $G$.
	First, we claim that the natural morphism
	\begin{equation}\label{eq_hrthsrigoawe}
		(\Abelian{N})^{G/N} \to \Abelian{G}
	\end{equation}
	is injective.
	Indeed, consider the following natural morphism and transfer morphism:
	\begin{equation*}
		R_{N}\colon
		\Abelian{N}\to\Abelian{G},
		\qquad
		\mathrm{transfer}_{N}\colon
		\Abelian{G}\to \Abelian{N}
	\end{equation*}
	Let $G/N=\cup_{1\leq i\leq [G:N]} a_{i}N$ be a disjoint union of left cosets with representatives $\{a_{i}\}_{i}$.
	For each $n\in N$, we have $\mathrm{transfer}_{N}(R_{N}(n))=\sum (a_{i}^{-1}na_{i})$ on $\Abelian{N}$, i.e., we have
	\begin{equation*}
		\mathrm{transfer}_{N}\circ R_{N}=\sum_{a\in G/N} a\text{-conjugation}
	\end{equation*}
	on $\Abelian{N}$.
	In particular, the restricted morphism $(\mathrm{transfer}_{N}\circ R_{N})\mid_{(\Abelian{N})^{G/N}}$ coincides with multiplication by $[G:N]$.
	Since $G$ is ab-torsion-free, the group $\Abelian{N}$ is torsion-free.
	Hence $\mathrm{transfer}_{N}\circ R_{N}$ is injective on $(\Abelian{N})^{G/N}$.
	Therefore, the restricted morphism $(R_{N})\mid_{(\Abelian{N})^{G/N}}$, which is the morphism~\eqref{eq_hrthsrigoawe}, is also injective.
	This completes the proof of the claim.

	Let $\mathcal{N}$ be a set of all open normal subgroups that contain $K$.
	By running over all $N\in\mathcal{N}$, we have
	\begin{equation*}
		(\Abelian{K})^{G/K}
		=(\varprojlim_{N\in\mathcal{N}}(\Abelian{N}))^{G/K}
		=\varprojlim_{N\in\mathcal{N}}(\Abelian{N})^{G/K}
		=\varprojlim_{N\in\mathcal{N}}(\Abelian{N})^{G/N}
	\end{equation*}
	Therefore, this claim implies that the natural morphism
	\begin{equation*}
		(\Abelian{K})^{G/K}
		\to
		\Abelian{G}
	\end{equation*}
	is also injective.

	By taking the abelianization of the exact sequence $1\to K\to G\to G/K\to 1$, we have the exact sequence
	\begin{equation}\label{eq_grgwagawegw}
		(\Abelian{K})_{G/K}
		\to\Abelian{G}
		\to\Abelian{(G/K)}
		\to 1,
	\end{equation}
	where $(\Abelian{K})_{G/K}$ stands for the module of $G/K$-coinvariants of $\Abelian{K}$.
	Since $K\subset \DerivSeries{G}{1}$, the natural morphism $\Abelian{G}\xrightarrow{\sim}\Abelian{(G/K)}$ is an isomorphism.
	Hence the left-hand morphism of~\eqref{eq_grgwagawegw} is the zero map.
	The above claim implies that the composition of these natural morphisms
	\begin{equation*}
		(\Abelian{K})^{G/K}
		\hookrightarrow\Abelian{K}
		\twoheadrightarrow(\Abelian{K})_{G/K}
		\overset{0}\to
		\Abelian{G}.
	\end{equation*}
	is injective.
	Therefore, we obtain $(\Abelian{K})^{G/K}=1$.
	This completes the proof.
\end{proof}

\begin{lemma}\label{abfaithfulpropertieslemma}
	Let $G$ be an ab-faithful profinite group.
	\begin{enumerate}[(1)]
		\item\label{abfaithfulpropertieslemma1}
		      $G$ is center-free.
		\item\label{abfaithfulpropertieslemma2}
		      For any $m\in\mathbb{Z}_{\geq 1}$, we have $\CenterSubgrp{\FinStepSolvQuo{G}{m}}\subset\DerivSeries{(\FinStepSolvQuo{G}{m})}{m-1}$.
	\end{enumerate}
\end{lemma}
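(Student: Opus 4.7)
The plan is to derive both statements directly from the definition of ab-faithfulness, by showing that any central element is forced to lie inside every ``sufficiently small'' open normal subgroup, and then appealing to residual finiteness or, respectively, to closedness of the relevant subgroup of the profinite group.

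For part~\ref{abfaithfulpropertieslemma1}, I would take $z\in\CenterSubgrp{G}$ and an arbitrary open normal subgroup $N$ of $G$. Since $z$ commutes with every element of $N$, conjugation by $z$ acts trivially on $N$, hence also on $\Abelian{N}$. Applying the definition of ab-faithfulness with $H=G$ then forces the image of $z$ in $G/N$ to be trivial, i.e., $z\in N$. Since profinite groups are Hausdorff and every open subgroup contains an open normal one, intersecting over all open normal $N$ gives $z=1$.

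For part~\ref{abfaithfulpropertieslemma2}, the case $m=1$ is trivial since $\DerivSeries{(\FinStepSolvQuo{G}{1})}{0}=\FinStepSolvQuo{G}{1}$, so I would assume $m\geq 2$. Given $z\in\CenterSubgrp{\FinStepSolvQuo{G}{m}}$, the argument parallels part~\ref{abfaithfulpropertieslemma1}, but with ab-faithfulness of $G$ replaced by the variant recorded in Remark~\ref{rem:abfaithfulabtorsionfree}\ref{rem:abfaithfulabtorsionfree4}: for every open normal subgroup $N$ of $\FinStepSolvQuo{G}{m}$ containing $\DerivSeries{(\FinStepSolvQuo{G}{m})}{m-1}$, the conjugation action of $\FinStepSolvQuo{G}{m}/N$ on $\Abelian{N}$ is faithful. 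Since $z$ is central, it acts trivially, so $z\in N$. Finally, because $\DerivSeries{(\FinStepSolvQuo{G}{m})}{m-1}$ is a closed normal subgroup of the profinite group $\FinStepSolvQuo{G}{m}$, it equals the intersection of the open normal subgroups containing it, and the desired inclusion follows.

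No serious obstacle is expected: both parts amount to unwinding the definitions, with the only subtle ingredient being the translation from ab-faithfulness of $G$ itself to the corresponding faithfulness inside $\FinStepSolvQuo{G}{m}$, which Remark~\ref{rem:abfaithfulabtorsionfree} has already packaged for us via Lemma~\ref{lemma:mstep}.
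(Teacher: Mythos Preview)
Your proposal is correct and follows essentially the same route as the paper: in both parts you use ab-faithfulness (directly for $G$, and via Remark~\ref{rem:abfaithfulabtorsionfree}\ref{rem:abfaithfulabtorsionfree4} for $\FinStepSolvQuo{G}{m}$) to force a central element into every open normal subgroup $N$ of the relevant kind, and then intersect. The paper's write-up is virtually identical, differing only cosmetically (it phrases the conclusion as $\CenterSubgrp{G}\subset\ker(G/N\to\Aut(\Abelian{N}))$ rather than tracking a single element $z$, and does not separate out the trivial case $m=1$).
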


\begin{proof}
	\noindent\ref{abfaithfulpropertieslemma1}
	Let $N$ be an open normal subgroup of $G$.
	Then we have
	\begin{equation*}
		\CenterSubgrp{G}\subset \ker\Bigl(G/N \longrightarrow \Aut\bigl(\Abelian{N}\bigr)\Bigr).
	\end{equation*}
	By ab-faithfulness (applied to the pair $(H,N)=(G,N)$), the above morphism is injective,
	hence the kernel is trivial.
	Therefore, by running over all such $N$, we obtain
	\begin{equation*}
		\CenterSubgrp{G}\subset\bigcap_{N} N = \{1\}.
	\end{equation*}
	Thus the group $G$ is center-free.

	\noindent
	\ref{abfaithfulpropertieslemma2}
	The proof is essentially the same as the proof of~\ref{abfaithfulpropertieslemma1}.
	Let $N$ be an open normal subgroup of $\FinStepSolvQuo{G}{m}$ that contains $\DerivSeries{(\FinStepSolvQuo{G}{m})}{m-1}$.
	Then, by Remark~\ref{rem:abfaithfulabtorsionfree}\ref{rem:abfaithfulabtorsionfree4}, we have $\CenterSubgrp{\FinStepSolvQuo{G}{m}}\subset \ker\Bigl(\FinStepSolvQuo{G}{m}/N \longrightarrow \Aut\bigl(\Abelian{N}\bigr)\Bigr)$.
	By ab-faithfulness (applied to the pair $(H,N)=(\FinStepSolvQuo{G}{m},N)$), this morphism is injective,
	hence the kernel is trivial.
	Therefore, by running over all such $N$, we obtain
	\begin{equation*}
		\CenterSubgrp{\FinStepSolvQuo{G}{m}}\subset\bigcap_{N} N = \DerivSeries{(\FinStepSolvQuo{G}{m})}{m-1}.
	\end{equation*}
\end{proof}

The following proposition is the main result of this subsection.

\begin{proposition}\label{basic_prop_for_centerfree_m}
	Let $G$ be an ab-torsion-free ab-faithful profinite group.
	Then, for any $m\in\mathbb{Z}_{\geq 2}$, the quotient $\FinStepSolvQuo{G}{m}$ is torsion-free and center-free.
\end{proposition}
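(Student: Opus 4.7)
The plan is to combine Lemma~\ref{abfaithfulpropertieslemma}\ref{abfaithfulpropertieslemma2} with Lemma~\ref{abtorfreepropertieslemma}\ref{abtorfreepropertieslemma3}, each of which extracts precisely one half of the required conclusion. Specifically, ab-faithfulness will confine the center into a small piece of $\FinStepSolvQuo{G}{m}$, and ab-torsion-freeness will then force that piece to contribute nothing to the center.

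More concretely, let $z\in\CenterSubgrp{\FinStepSolvQuo{G}{m}}$. By Lemma~\ref{abfaithfulpropertieslemma}\ref{abfaithfulpropertieslemma2}, ab-faithfulness gives
\begin{equation*}
	\CenterSubgrp{\FinStepSolvQuo{G}{m}}\subset\DerivSeries{(\FinStepSolvQuo{G}{m})}{m-1},
\end{equation*}
so $z$ lies in the closed normal subgroup $\DerivSeries{(\FinStepSolvQuo{G}{m})}{m-1}$, and in particular is fixed by the conjugation action of the whole group $\FinStepSolvQuo{G}{m}$. Since this action factors through the quotient $\FinStepSolvQuo{G}{m}\twoheadrightarrow\FinStepSolvQuo{G}{m-1}$ (because $\DerivSeries{(\FinStepSolvQuo{G}{m})}{m-1}$ is abelian, so its elements act trivially on themselves by conjugation), the element $z$ is a $\FinStepSolvQuo{G}{m-1}$-fixed point of the conjugation action of $\FinStepSolvQuo{G}{m-1}$ on $\DerivSeries{(\FinStepSolvQuo{G}{m})}{m-1}$.

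Now Lemma~\ref{abtorfreepropertieslemma}\ref{abtorfreepropertieslemma3} asserts that, under ab-torsion-freeness, this conjugation action has trivial fixed subgroup, i.e.\ $(\DerivSeries{(\FinStepSolvQuo{G}{m})}{m-1})^{\FinStepSolvQuo{G}{m-1}}=1$. Therefore $z=1$, which proves $\CenterSubgrp{\FinStepSolvQuo{G}{m}}=1$. There is essentially no obstacle here: all the real work has been absorbed into the two preceding lemmas, and the only point to verify carefully is the compatibility that makes the $\FinStepSolvQuo{G}{m}$-conjugation action on $\DerivSeries{(\FinStepSolvQuo{G}{m})}{m-1}$ factor through $\FinStepSolvQuo{G}{m-1}$, which is immediate from abelianness of $\DerivSeries{(\FinStepSolvQuo{G}{m})}{m-1}$.
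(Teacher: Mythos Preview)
Your proof is correct and follows essentially the same approach as the paper: use Lemma~\ref{abfaithfulpropertieslemma}\ref{abfaithfulpropertieslemma2} to place the center inside $\DerivSeries{(\FinStepSolvQuo{G}{m})}{m-1}$, then use Lemma~\ref{abtorfreepropertieslemma}\ref{abtorfreepropertieslemma3} to conclude that the fixed points of the conjugation action there are trivial. Your explicit remark that the $\FinStepSolvQuo{G}{m}$-action factors through $\FinStepSolvQuo{G}{m-1}$ (by abelianness of $\DerivSeries{(\FinStepSolvQuo{G}{m})}{m-1}$) is a helpful clarification that the paper leaves implicit.
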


\begin{proof}
	The torsion-freeness follows from Lemma~\ref{abtorfreepropertieslemma}\ref{abtorfreepropertieslemma2}.
	We show the center-freeness.
	For any $a\in \FinStepSolvQuo{G}{m}$, the condition $a\in \CenterSubgrp{\FinStepSolvQuo{G}{m}}$ is equivalent to the condition that $gag^{-1}=a$ for every $g\in \FinStepSolvQuo{G}{m}$, and hence
	\begin{equation*}
		\CenterSubgrp{\FinStepSolvQuo{G}{m}}\cap \DerivSeries{(\FinStepSolvQuo{G}{m})}{m-1} = (\DerivSeries{(\FinStepSolvQuo{G}{m})}{m-1})^{\FinStepSolvQuo{G}{m-1}}.
	\end{equation*}
	By applying Lemma~\ref{abtorfreepropertieslemma}\ref{abtorfreepropertieslemma3} to $K=\DerivSeries{G}{m-1}$, the right-hand side is trivial.
	On the other hand, by Lemma~\ref{abfaithfulpropertieslemma}\ref{abfaithfulpropertieslemma2}, we have
	\begin{equation*}
		\CenterSubgrp{\FinStepSolvQuo{G}{m}}\subset\DerivSeries{(\FinStepSolvQuo{G}{m})}{m-1}.
	\end{equation*}
	Thus $\FinStepSolvQuo{G}{m}$ is center-free.
\end{proof}

\subsubsection{}
As we have already seen, a free pro-$\Sigma$ group is slim.
Since slimness is stronger than center-freeness, it is natural to ask whether it also holds for the maximal $m$-step solvable quotients of ab-faithful and ab-torsion-free profinite groups.
At the time of writing, the author does not know whether such groups are slim in general.
However, the following fact can be proved:

\begin{proposition}\label{climproposition}
	Let $G$ be an ab-torsion-free and ab-faithful profinite group.
	Then, for any $m\in\mathbb{Z}_{\geq 1}$ and any open subgroup $H$ of $\FinStepSolvQuo{G}{m}$, we have
	\begin{equation*}
		\Centralizer{\FinStepSolvQuo{G}{m}}{H}\subset \DerivSeries{(\FinStepSolvQuo{G}{m})}{m-1}.
	\end{equation*}
\end{proposition}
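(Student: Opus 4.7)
The plan is to reduce the claim to a centralizer-in-abelianization question where ab-faithfulness applies directly. Set $\Pi \coloneq \FinStepSolvQuo{G}{m}$. Since $\Pi/\DerivSeries{\Pi}{m-1} \cong \FinStepSolvQuo{G}{m-1}$ is profinite, its open normal subgroups intersect to the identity; equivalently, $\DerivSeries{\Pi}{m-1}$ equals the intersection $\bigcap_{N} N$ taken over all open normal subgroups $N$ of $\Pi$ containing $\DerivSeries{\Pi}{m-1}$. Hence it suffices, for each such $N$, to prove $\Centralizer{\Pi}{H} \subset N$.

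To do so, I would first replace $H$ by a more convenient open subgroup. Taking the normal core of $H$ (a finite intersection of conjugates of $H$, so still open) we may assume $H$ is open normal in $\Pi$. Replacing $H$ further by $H \cap N$---which again is open normal and only enlarges the centralizer of $H$---we may moreover assume $H \subset N$.

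Now take $g \in \Centralizer{\Pi}{H}$. By definition, conjugation by $g$ is trivial on $H$, hence trivial on $\Abelian{H}$. The inclusion $H \hookrightarrow N$ induces a $\Pi$-equivariant continuous morphism $\iota \colon \Abelian{H} \to \Abelian{N}$. Its cokernel $N/(H\cdot [N,N])$ is a quotient of the finite group $N/H$, so $\iota(\Abelian{H})$ has finite index $d$ in $\Abelian{N}$. For any $y \in \Abelian{N}$ one then has $dy \in \iota(\Abelian{H})$, so $dy$ is fixed by $g$ and $d(g\cdot y - y)=0$. By Remark~\ref{rem:abfaithfulabtorsionfree}\ref{rem:abfaithfulabtorsionfree3} the group $\Abelian{N}$ is torsion-free, so $g\cdot y = y$; thus $g$ acts trivially on all of $\Abelian{N}$. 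The faithfulness statement in Remark~\ref{rem:abfaithfulabtorsionfree}\ref{rem:abfaithfulabtorsionfree4} then gives $g\in N$, as required.

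The main obstacle is the step from \emph{trivial action on $\Abelian{H}$} to \emph{trivial action on $\Abelian{N}$}: a priori $\iota$ need not be injective, so one cannot just transport relations from the subgroup to the larger group. Controlling the cokernel of $\iota$ (which, crucially, is finite because $H \subset N$ has finite index) together with torsion-freeness of $\Abelian{N}$ is precisely what makes this bridge possible, and this is where both standing hypotheses on $G$ are simultaneously used.
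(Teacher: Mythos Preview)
Your proof is correct and follows essentially the same strategy as the paper: fix an open normal $N\supset\DerivSeries{\Pi}{m-1}$, show that the centralizer acts trivially on a finite-index subgroup of $\Abelian{N}$, use torsion-freeness of $\Abelian{N}$ to upgrade this to triviality on all of $\Abelian{N}$, and conclude via ab-faithfulness. The only cosmetic differences are that the paper works directly with the image of $H\cap N$ in $\Abelian{N}$ (rather than first passing to the normal core) and phrases the torsion-freeness step as ``$\Abelian{N}\hookrightarrow\Abelian{N}\otimes_{\mathbb{Z}}\mathbb{Q}$'' instead of your equivalent ``multiply by $d$'' argument.
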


\begin{proof}
	Let $H\subset \FinStepSolvQuo{G}{m}$ be an open subgroup, and take
	$c\in \Centralizer{\FinStepSolvQuo{G}{m}}{H}$.
	Let $N\trianglelefteq_{\mathrm{open}} \FinStepSolvQuo{G}{m}$ be an open normal subgroup such that
	$\DerivSeries{(\FinStepSolvQuo{G}{m})}{m-1}\subset N$.
	Let $H_{N}\subset \Abelian{N}$ be the image of $H\cap N$ under the natural morphism
	$N\twoheadrightarrow \Abelian{N}$.
	Since $c$ centralizes $H$, conjugation by $c$ is trivial on $H\cap N$, hence it is also
	trivial on $H_{N}$.
	As $H\cap N$ is open in $N$, the subgroup $H_N$ is open (equivalently, of finite index) in $\Abelian{N}$.
	Therefore $\Abelian{N}/H_N$ is a torsion group, and hence the natural morphism
	\begin{equation*}
		H_{N}\otimes_{\mathbb{Z}}\mathbb{Q}\xrightarrow{\sim}\Abelian{N}\otimes_{\mathbb{Z}}\mathbb{Q}
	\end{equation*}
	is an isomorphism.
	It follows that conjugation by $c$ acts trivially on $\Abelian{N}\otimes_{\mathbb{Z}}\mathbb{Q}$.
	By Remark~\ref{rem:abfaithfulabtorsionfree}\ref{rem:abfaithfulabtorsionfree3},
	the group $\Abelian{N}$ is torsion-free.
	Thus the natural morphism $\Abelian{N}\hookrightarrow \Abelian{N}\otimes_{\mathbb{Z}}\mathbb{Q}$ is injective.
	Consequently, conjugation by $c$ is already trivial on $\Abelian{N}$.
	On the other hand, by Remark~\ref{rem:abfaithfulabtorsionfree}\ref{rem:abfaithfulabtorsionfree4},
	the conjugation action of $\FinStepSolvQuo{G}{m}/N$ on $\Abelian{N}$ is faithful, i.e., we have
	\begin{equation*}
		\ker\Bigl(\FinStepSolvQuo{G}{m} \to \Aut(\Abelian{N})\Bigr)=N.
	\end{equation*}
	Since $c$ acts trivially on $\Abelian{N}$, we obtain $c\in N$.
	By running over all such $N$, we conclude that
	\begin{equation*}
		c \in \bigcap_{N} N = \DerivSeries{(\FinStepSolvQuo{G}{m})}{m-1}.
	\end{equation*}
	This completes the proof.
\end{proof}

\subsection{Proof of the center-freeness of the maximal \texorpdfstring{$m$}{m}-step solvable quotients of the geometric fundamental groups of hyperbolic curves}

\subsubsection{}
In this subsection, we show that the maximal $m$-step solvable quotients of the geometric fundamental groups of smooth curves are center-free.
We always assume that smooth curves are geometrically connected.
For any smooth curve $X$ over a field $k$, we write
\begin{equation*}
	\EtFundGrpWithPt{X}{\ast}\qquad (\text{resp. }\EtFundGrpTameWithPt{X}{\ast})
\end{equation*}
for the \textit{\'etale fundamental group} (resp. \textit{tame fundamental group}) of $X$, where $\ast:\Spec(\Omega)\to X$ denotes a geometric point of $X$ and $\Omega$ denotes an algebraically closed field.
The fundamental group depends on the choice of base point only up to inner automorphisms, and therefore we omit the choice of base point below.

\subsubsection{}
We define the \textit{Euler characteristic} of $X$ by
\begin{equation}\label{eCrep}
	\chi(X)
	\coloneq
	\sum_{i=0}^{2}(-1)^{i}\dim_{\mathbb{Q}_{\ell}}(\mathrm{H}_{\et}^{i}(X,\mathbb{Q}_{\ell}))
	=
	\begin{cases}
		2-\dim_{\mathbb{Q}_{\ell}}(\mathrm{H}_{\et}^{1}(X,\mathbb{Q}_{\ell})) & (X:\ \text{proper}), \\
		1-\dim_{\mathbb{Q}_{\ell}}(\mathrm{H}_{\et}^{1}(X,\mathbb{Q}_{\ell})) & (X:\ \text{affine}).
	\end{cases}
\end{equation}
If $X$ is of type $(g,r)$, then a straightforward calculation shows that
\begin{equation*}
	\chi(X)=2-2g-r.
\end{equation*}
We say that $X$ is \textit{hyperbolic} if $\chi(X)<0$ (equivalently, if $(g,r)\notin\{(0,0), (0,1), (0,2), (1,0)\}$).
The basic fact about hyperbolicity is that, if $k$ is separably closed and $\ell$ is a prime number different from the characteristic of $k$, then
\begin{equation*}
	\EtFundGrp{X}^{\ell}\text{ is non-abelian}\text{ if and only if }\text{ $X$ is hyperbolic}
\end{equation*}
(see~\cite[Corollary~1.4]{MR1478817}).

\begin{lemma}\label{faithfuletlemma}
	Let $k$ be a separably closed field and let $\ell$ be a prime number different from the characteristic of $k$.
	Let $X$ be a hyperbolic curve over $k$, and let
	$f\colon Y \to X$ be a finite \'etale Galois covering with Galois group $\Gamma \coloneq \Gal(Y/X)$.
	Then the natural action
	\begin{equation*}
		\Gamma \curvearrowright \mathrm{H}_{\et}^{1}(Y,\mathbb{Q}_{\ell})
	\end{equation*}
	is faithful.
\end{lemma}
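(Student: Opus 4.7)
My plan is to assume toward a contradiction that some nontrivial $\gamma\in\Gamma$ acts trivially on $\mathrm{H}_{\et}^{1}(Y,\mathbb{Q}_{\ell})$ and derive a numerical contradiction using Euler characteristics. Set $H\coloneq \GeneSubgrp{\gamma}\subset \Gamma$ and $Z\coloneq Y/H$. Because $Y\to X$ is étale Galois, the $\Gamma$-action on $Y$ is free, hence so is that of $H$; therefore $Y\to Z$ is a finite étale Galois covering of degree $|H|\geq 2$ with group $H$, and $Z$ is again a smooth curve over $k$. The étale multiplicativity of the Euler characteristic (Riemann--Hurwitz for unramified covers) then gives $\chi(Y)=|H|\cdot\chi(Z)$.

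Next I would apply the Hochschild--Serre spectral sequence
\begin{equation*}
E_{2}^{p,q}=\mathrm{H}^{p}(H,\mathrm{H}_{\et}^{q}(Y,\mathbb{Q}_{\ell}))\Rightarrow \mathrm{H}_{\et}^{p+q}(Z,\mathbb{Q}_{\ell}).
\end{equation*}
Since $H$ is finite and the coefficients are $\mathbb{Q}_{\ell}$-vector spaces, the averaging operator $\tfrac{1}{|H|}\sum_{h\in H}h$ makes higher $H$-cohomology vanish, so the spectral sequence degenerates at $E_{2}$ and we obtain $\mathrm{H}_{\et}^{q}(Z,\mathbb{Q}_{\ell})\cong \mathrm{H}_{\et}^{q}(Y,\mathbb{Q}_{\ell})^{H}$ for each $q$. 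Now $H$ acts trivially on $\mathrm{H}_{\et}^{0}(Y,\mathbb{Q}_{\ell})$; on $\mathrm{H}_{\et}^{1}(Y,\mathbb{Q}_{\ell})$ by our hypothesis; and on $\mathrm{H}_{\et}^{2}(Y,\mathbb{Q}_{\ell})$ (which is either $0$ or $\mathbb{Q}_{\ell}$), since any automorphism acts on the top cohomology through its degree, here $1$. Taking alternating sums of dimensions therefore yields $\chi(Z)=\chi(Y)$.

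Combining $\chi(Y)=|H|\cdot\chi(Z)$ with $\chi(Z)=\chi(Y)$ forces $(|H|-1)\chi(Y)=0$, and since $|H|\geq 2$ we conclude $\chi(Y)=0$. On the other hand $Y\to X$ is étale of degree $|\Gamma|\geq 1$, so $\chi(Y)=|\Gamma|\cdot\chi(X)<0$ by the hyperbolicity of $X$, a contradiction. The only mildly subtle ingredient is the $E_{2}$-degeneration of the Hochschild--Serre spectral sequence with $\mathbb{Q}_{\ell}$-coefficients; this is the reason we must work rationally rather than with $\mathbb{Z}_{\ell}$- or $\mathbb{Z}/\ell^{n}$-coefficients, where torsion in $\mathrm{H}^{1}$ could spoil the identification with $H$-invariants.
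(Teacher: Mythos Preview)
Your approach is essentially the paper's: reduce to a quotient by a subgroup acting trivially on $\mathrm{H}_{\et}^{1}$, use Hochschild--Serre over $\mathbb{Q}_{\ell}$ to identify cohomology with invariants, and then contradict hyperbolicity via Riemann--Hurwitz. The paper takes the full kernel $\Gamma_{0}$ and extracts only the $\mathrm{H}^{1}$-statement from the five-term exact sequence, while you pass to a cyclic subgroup and invoke the full $E_{2}$-degeneration; both variants work.

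There is one inaccuracy. The claimed equality $\chi(Y)=|H|\cdot\chi(Z)$ for an \'etale cover of smooth curves need not hold in positive characteristic when the curves are affine: although $H$ acts freely on $Y$, its extension to the smooth compactification $\overline{Y}$ may fix boundary points, and the resulting ramification of $\overline{Y}\to\overline{Z}$ can be wild (e.g.\ Artin--Schreier covers of $\mathbb{A}^{1}$ restricted to a hyperbolic open already give counterexamples). What Riemann--Hurwitz does yield in general is the inequality $\chi(Y)\le |H|\cdot\chi(Z)$, since the ramification contribution is nonnegative; this is exactly what the paper uses. Combined with your identity $\chi(Z)=\chi(Y)$ and $\chi(Y)<0$, the inequality still forces $|H|\le 1$, so the argument goes through once you replace the equality by the inequality.
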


\begin{proof}
	Replacing $k$ by an algebraic closure does not change the statement.
	Hence we may assume that $k$ is algebraically closed.
	Let
	\begin{equation*}
		\Gamma_{0}\coloneq \ker\!\Bigl(
		\Gamma \longrightarrow \Aut_{\mathbb{Q}_{\ell}}\bigl(\mathrm{H}_{\et}^{1}(Y,\mathbb{Q}_{\ell})\bigr)
		\Bigr),
	\end{equation*}
	and put $Y_{0}\coloneq Y/\Gamma_{0}$.
	Then $Y\to Y_{0}$ is a finite \'etale Galois covering with Galois group $\Gamma_{0}$.
	Consider the Hochschild--Serre spectral sequence for the Galois covering $Y\to Y_{0}$ with coefficients $\mathbb{Q}_{\ell}$:
	\begin{equation*}
		E_{2}^{p,q}
		=
		\mathrm{H}^{p}\!\bigl(\Gamma_{0},\,\mathrm{H}_{\et}^{q}(Y,\mathbb{Q}_{\ell})\bigr)
		\ \Longrightarrow\
		\mathrm{H}_{\et}^{p+q}(Y_{0},\mathbb{Q}_{\ell}).
	\end{equation*}
	Then we obtain the associated five-term exact sequence
	\begin{equation*}
		0
		\to
		\mathrm{H}^{1}(\Gamma_{0},\mathbb{Q}_{\ell})
		\to
		\mathrm{H}_{\et}^{1}(Y_{0},\mathbb{Q}_{\ell})
		\to
		\mathrm{H}_{\et}^{1}(Y,\mathbb{Q}_{\ell})^{\Gamma_{0}}
		\to
		\mathrm{H}^{2}(\Gamma_{0},\mathbb{Q}_{\ell}).
	\end{equation*}
	Here $\Gamma_{0}$ is finite and hence $\mathrm{H}^{1}(\Gamma_{0},\mathbb{Q}_{\ell})=\mathrm{H}^{2}(\Gamma_{0},\mathbb{Q}_{\ell})=0$.
	Therefore, the restriction morphism
	\begin{equation*}
		\mathrm{H}_{\et}^{1}(Y_{0},\mathbb{Q}_{\ell})
		\xrightarrow{\ \sim\ }
		\mathrm{H}_{\et}^{1}(Y,\mathbb{Q}_{\ell})^{\Gamma_{0}}
	\end{equation*}
	is an isomorphism.
	By definition of $\Gamma_{0}$, the $\Gamma_{0}$-action on
	$\mathrm{H}_{\et}^{1}(Y,\mathbb{Q}_{\ell})$ is trivial, hence
	\begin{equation*}
		\dim_{\mathbb{Q}_{\ell}} (\mathrm{H}_{\et}^{1}(Y_{0},\mathbb{Q}_{\ell}))
		=
		\dim_{\mathbb{Q}_{\ell}} (\mathrm{H}_{\et}^{1}(Y,\mathbb{Q}_{\ell})).
	\end{equation*}
	Since the morphism $Y\to Y_{0}$ is finite, the curve $Y$ is proper if and only if $Y_{0}$ is proper.
	Hence (\ref{eCrep}) implies that $\chi(Y_{0})=\chi(Y)$.

	On the other hand, by the Riemann--Hurwitz theorem for their compactifications of $Y\to Y_{0}$ (see~\cite[Corollary~2.4]{MR0463157}), we have the inequality
	\begin{equation*}
		\chi(Y) \leq d\cdot\chi(Y_{0}),
	\end{equation*}
	where $d\coloneq \#\Gamma_{0}$.
	By the hypothesis, the curve $X$ is hyperbolic, and hence $\chi(Y)<0$.
	This implies that $d=1$, i.e., the action $\Gamma\curvearrowright \mathrm{H}_{\et}^{1}(Y,\mathbb{Q}_{\ell})$ is faithful.
\end{proof}

\begin{proposition}\label{prop_fund_abs}
	Let $X$ be a hyperbolic curve over a field $k$.
	Assume that $k$ is a separably closed field and that $\Sigma$ contains a prime number different from the characteristic of $k$.
	Then the groups $\EtFundGrp{X}^{\Sigma}$ and $\EtFundGrpTame{X}^{\Sigma}$ are both ab-torsion-free and ab-faithful.
\end{proposition}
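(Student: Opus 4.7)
The plan is to verify the two conditions of Definition~\ref{defabfaithfulabtorsionfree} by passing to the cover side, and then invoking Lemma~\ref{faithfuletlemma} for the faithfulness part. First I would set up the dictionary between open subgroups and covers: every open subgroup $H$ of $\EtFundGrp{X}^{\Sigma}$ (resp.\ $\EtFundGrpTame{X}^{\Sigma}$) corresponds to a connected finite \'etale (resp.\ tame) cover $Y\to X$ of $\Sigma$-degree, and under this correspondence $H$ is identified with $\EtFundGrp{Y}^{\Sigma}$ (resp.\ $\EtFundGrpTame{Y}^{\Sigma}$). Since $Y\to X$ is finite \'etale, Riemann--Hurwitz gives $\chi(Y)=[Y:X]\cdot\chi(X)<0$, so $Y$ is again a hyperbolic smooth curve over $k$.

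For ab-torsion-freeness, I would describe $\Abelian{H}$ as a $\mathbb{Z}_{\Sigma}$-module one prime at a time. For $\ell\in\Sigma$ with $\ell\neq\mathrm{char}(k)$, the pro-$\ell$ component is the $\ell$-adic Tate module of the generalized Jacobian of $Y$, which is a free $\mathbb{Z}_{\ell}$-module. When $p=\mathrm{char}(k)$ lies in $\Sigma$, the pro-$p$ component is still a free pro-$p$ abelian group, by the classical description via the Hasse--Witt $p$-rank of the Jacobian (with Artin--Schreier contributions in the affine case). Hence $\Abelian{H}$ is torsion-free, and the tame case is handled identically, since each pro-$\ell$ quotient of the tame abelianization is again torsion-free.

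For ab-faithfulness, fix an open normal subgroup $N\trianglelefteq H$, corresponding to a Galois cover $Z\to Y$ with Galois group $\Gamma\coloneq H/N$ a finite $\Sigma$-group. Choose $\ell\in\Sigma$ with $\ell\neq\mathrm{char}(k)$. Since $Y$ is hyperbolic, Lemma~\ref{faithfuletlemma} applied to $Z\to Y$ shows that $\Gamma$ acts faithfully on $\mathrm{H}_{\et}^{1}(Z,\mathbb{Q}_{\ell})$. The pro-$\ell$ quotient of $\Abelian{N}$ is $\Gamma$-equivariantly identified with the $\mathbb{Z}_{\ell}$-dual of $\mathrm{H}_{\et}^{1}(Z,\mathbb{Z}_{\ell})$, which by the previous step is torsion-free; tensoring with $\mathbb{Q}_{\ell}$ is therefore injective, and the faithfulness of $\Gamma\curvearrowright\mathrm{H}_{\et}^{1}(Z,\mathbb{Q}_{\ell})$ transfers to the pro-$\ell$ quotient of $\Abelian{N}$, hence to $\Abelian{N}$ itself. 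The tame case is the same, since for $\ell\neq\mathrm{char}(k)$ the pro-$\ell$ parts of the tame and \'etale abelianizations of $\EtFundGrp{Z}$ agree.

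The main obstacle will be the careful bookkeeping: verifying that open subgroups of the pro-$\Sigma$ (tame) fundamental groups are indeed identified with the expected pro-$\Sigma$ fundamental groups of connected $\Sigma$-degree covers, and spelling out the $\Gamma$-equivariant isomorphism between the pro-$\ell$ quotient of $\Abelian{N}$ and $\mathrm{H}_{\et}^{1}(Z,\mathbb{Z}_{\ell})^{\vee}$; once these identifications are in place, the key faithfulness is an immediate consequence of the geometric input supplied by Lemma~\ref{faithfuletlemma}.
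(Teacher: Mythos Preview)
Your proposal is correct and follows essentially the same route as the paper: identify open subgroups with (tame) pro-$\Sigma$ fundamental groups of hyperbolic covers, deduce ab-torsion-freeness from the known structure of their abelianizations, and obtain ab-faithfulness by applying Lemma~\ref{faithfuletlemma} together with the $\Gamma$-equivariant identification of the pro-$\ell$ abelianization with the dual of $\mathrm{H}^1_{\et}(-,\mathbb{Z}_\ell)$. The only differences are cosmetic: the paper simply cites \cite[Corollary~1.2]{MR1478817} for ab-torsion-freeness rather than spelling out the Tate module / Hasse--Witt description, and it absorbs your intermediate cover $Y$ by replacing $X$ with it at the outset; your flagged ``bookkeeping'' concern about identifying $H$ with $\EtFundGrp{Y}^{\Sigma}$ is treated as standard in the paper.
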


\begin{proof}
	For simplicity, we write
	\begin{equation*}
		\EtFundGrpGeom{X}\coloneq \EtFundGrp{X}^{\Sigma}\qquad(\text{resp. }\EtFundGrpTame{X}^{\Sigma}).
	\end{equation*}
	Since any open subgroup of $\Delta$ is also an \'etale fundamental group of a hyperbolic curve over $k$, the known result~\cite[Corollary~1.2]{MR1478817} implies that $\EtFundGrpGeom{X}$ is ab-torsion-free.

	Next, we show the ab-faithfulness.
	Let $H$ be an open subgroup of $\EtFundGrpGeom{X}$ and $N$ an open normal subgroup of $H$.
	To prove ab-faithfulness, we may replace $\EtFundGrpGeom{X}$ by $H$ and assume that $H=\EtFundGrpGeom{X}$.
	Let $Y\to X$ be the connected finite \'etale Galois covering corresponding to $N$, with Galois group $\Gamma\coloneq\EtFundGrpGeom{X}/N$.
	Let $\ell\in\Sigma$ be a prime number different from the characteristic of $k$.
	Then the action
	\begin{equation*}
		\Gamma\curvearrowright \mathrm{H}_{\et}^{1}(Y,\mathbb{Q}_{\ell})
	\end{equation*}
	is faithful by Lemma~\ref{faithfuletlemma}.
	On the other hand, the $\Gamma$-module $\mathrm{H}_{\et}^{1}(Y,\mathbb{Q}_{\ell})$ is the $\mathbb{Q}_{\ell}$-linear dual of $N^{\ab,\ell}\otimes_{\mathbb{Z}_{\ell}}\mathbb{Q}_{\ell}$, with the conjugation action of $\Gamma$ (see~\cite[Expos\'e XI, Section 5]{MR0354651}).
	Therefore, the composition of the natural morphisms
	\begin{equation*}
		\Gamma=\EtFundGrpGeom{X}/N\to \Aut(\Abelian{N})\to \Aut_{\mathbb{Z}_{\ell}}(N^{\ab,\ell})\to \Aut_{\mathbb{Q}_{\ell}}\bigl(N^{\ab,\ell}\otimes_{\mathbb{Z}_{\ell}}\mathbb{Q}_{\ell}\bigr).
	\end{equation*}
	is injective.
	This proves that $\EtFundGrpGeom{X}$ is ab-faithful.
	This completes the proof.
\end{proof}

\subsubsection{}
The following is the first main theorem of this paper:

\begin{theorem}\label{mainthm_center_free_fund}
	Let $X$ be a hyperbolic curve over a field $k$.
	Assume that $k$ is a separably closed field and that $\Sigma$ contains a prime number different from the characteristic of $k$.
	Then, for any $m\in\mathbb{Z}_{\geq 2}$, the maximal $m$-step solvable quotients of $\EtFundGrp{X}^{\Sigma}$ and $\EtFundGrpTame{X}^{\Sigma}$ are both torsion-free and center-free.
\end{theorem}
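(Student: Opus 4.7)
The plan is that this theorem should be essentially a formal consequence of the structural framework already developed in this section, so my proof will simply route the conclusions of Proposition~\ref{prop_fund_abs} through the general criteria Lemma~\ref{abtorfreepropertieslemma} and Proposition~\ref{basic_prop_for_centerfree_m}. Write $G$ for either of the groups $\EtFundGrp{X}^{\Sigma}$ and $\EtFundGrpTame{X}^{\Sigma}$; the argument will be uniform in the two cases, because Proposition~\ref{prop_fund_abs} handles both in parallel under the given hypotheses on $k$ and $\Sigma$.

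First, I would invoke Proposition~\ref{prop_fund_abs} to conclude that $G$ is simultaneously ab-torsion-free and ab-faithful. Torsion-freeness of $\FinStepSolvQuo{G}{m}$ for every $m \in \mathbb{Z}_{\geq 1}$ (in particular for $m\geq 2$) then follows directly from Lemma~\ref{abtorfreepropertieslemma}\ref{abtorfreepropertieslemma2}. Center-freeness of $\FinStepSolvQuo{G}{m}$ for $m\geq 2$ follows from Proposition~\ref{basic_prop_for_centerfree_m}, whose two hypotheses (ab-torsion-freeness and ab-faithfulness) are exactly the two properties just verified. This gives both assertions of the theorem.

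The main obstacle has already been overcome upstream: ab-faithfulness is the non-trivial input, and it was established in Lemma~\ref{faithfuletlemma} by a combination of Riemann--Hurwitz (applied to a finite \'etale Galois cover of the hyperbolic curve $X$, using hyperbolicity to force $\chi(Y)<0$) and the Hochschild--Serre spectral sequence with $\mathbb{Q}_{\ell}$-coefficients, where $\ell\in\Sigma$ is a prime distinct from the characteristic of $k$. Ab-torsion-freeness rests on the known fact that open subgroups of $\EtFundGrp{X}^{\Sigma}$ correspond to finite \'etale covers of the hyperbolic curve, whose pro-$\Sigma$ fundamental groups are torsion-free. Once these inputs are in place, the present statement is purely a matter of assembling them, and no further computation is required.
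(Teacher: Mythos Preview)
Your proposal is correct and matches the paper's own proof essentially verbatim: the paper also derives torsion-freeness from Lemma~\ref{abtorfreepropertieslemma}\ref{abtorfreepropertieslemma2} together with Proposition~\ref{prop_fund_abs}, and center-freeness from Proposition~\ref{basic_prop_for_centerfree_m} together with Proposition~\ref{prop_fund_abs}. Your additional commentary on where the substantive work occurs upstream is accurate but not needed for the formal proof.
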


\begin{proof}
	The assertion follows from Propositions~\ref{basic_prop_for_centerfree_m} and~\ref{prop_fund_abs}.
\end{proof}

\begin{corollary}\label{center_free_surface}
	For any $m\in\mathbb{Z}_{\geq 2}$, the maximal $m$-step solvable quotients of a pro-$\Sigma$ surface group of genus $g\geq 2$ are torsion-free and center-free.
\end{corollary}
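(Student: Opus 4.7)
The plan is to reduce the statement directly to Theorem~\ref{mainthm_center_free_fund} by realizing the pro-$\Sigma$ surface group as the pro-$\Sigma$ \'etale fundamental group of a suitable hyperbolic curve. Fix any algebraically closed field $k$ of characteristic zero (for instance, $k=\mathbb{C}$), and let $X$ be a smooth proper curve over $k$ of genus $g$. Since $g\geq 2$, we have $\chi(X)=2-2g<0$, so $X$ is hyperbolic. Moreover, because $k$ has characteristic zero, the assumption that $\Sigma$ contains a prime different from $\mathrm{char}(k)$ is automatic.

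Next, I would invoke the comparison between \'etale and topological fundamental groups in characteristic zero (Riemann existence / SGA~1): the \'etale fundamental group $\EtFundGrp{X}$ is the profinite completion of the topological fundamental group of the associated compact Riemann surface, which is the classical surface group of genus $g$. Passing to maximal pro-$\Sigma$ quotients, $\EtFundGrp{X}^{\Sigma}$ is canonically isomorphic to the pro-$\Sigma$ surface group of genus $g$.

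With this identification in hand, Theorem~\ref{mainthm_center_free_fund} applied to $X$ gives exactly what is wanted: for every $m\in\mathbb{Z}_{\geq 2}$, the maximal $m$-step solvable quotient $\FinStepSolvQuo{(\EtFundGrp{X}^{\Sigma})}{m}$ is torsion-free and center-free.

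There is no substantive obstacle here; the entire content has been packaged into the previous theorem, and the only non-formal input is the standard identification of the pro-$\Sigma$ surface group with $\EtFundGrp{X}^{\Sigma}$ for $X$ a proper smooth curve of genus $g\geq 2$ over an algebraically closed field of characteristic zero. (If one wished to avoid characteristic zero, one could instead work over an algebraically closed field of characteristic $p\notin\Sigma$ and use Grothendieck's specialization theorem for tame fundamental groups, but this is not needed for the statement as formulated.)
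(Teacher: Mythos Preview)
Your proposal is correct and follows essentially the same approach as the paper: realize the pro-$\Sigma$ surface group as $\EtFundGrp{X}^{\Sigma}$ for a smooth proper curve $X$ of genus $g\geq 2$ over an algebraically closed field, then apply Theorem~\ref{mainthm_center_free_fund}. The paper's proof is a one-line version of exactly this reduction.
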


\begin{proof}
	There exists a smooth proper curve over an algebraically closed field whose pro-$\Sigma$ \'etale fundamental group is isomorphic to the pro-$\Sigma$ surface group.
	Thus, the assertion follows from Theorem~\ref{mainthm_center_free_fund}.
\end{proof}

\subsection{Injectivity of the \texorpdfstring{$m$}{m}-step solvable Grothendieck conjecture}

\subsubsection{}
Next, we explain an application of Theorem~\ref{mainthm_center_free_fund} to the $m$-step solvable analogue of the Grothendieck conjecture.
Let $X$ be a smooth curve over a field $k$.
In this subsection, we focus only on the case where the field $k$ has characteristic $0$ (or, more restrictively, the field $k$ is a sub-$p$-adic field for some prime number $p$, i.e., a field that embeds as a subfield of a finitely generated extension of $\mathbb{Q}_{p}$).
For simplicity, we set
\begin{equation*}
	\EtFundGrpGeom{X}
	\coloneq\EtFundGrp{X_{\overline{k}}}^{\Sigma},
	\qquad\text{and}\qquad
	\EtFundGrpPi{X}
	\coloneq\EtFundGrp{X}/\ker (\EtFundGrp{X_{\overline{k}}}\to\EtFundGrpGeom{X}),
\end{equation*}
where $\overline{k}$ is an algebraic closure of $k$.
In this notation, we have the following exact sequence, called the \textit{homotopy exact sequence}:
\begin{equation*}
	1
	\to\EtFundGrpGeom{X}
	\to\EtFundGrpPi{X}
	\to\AbsGalGrp{k}
	\to 1.
\end{equation*}
We also define
\begin{equation*}
	\FinStepSolvQuoGeom{\EtFundGrpPi{X}}{m}
	\coloneq
	\EtFundGrpPi{X}/\DerivSeries{\EtFundGrpGeom{X}}{m}.
\end{equation*}
By construction, the homotopy exact sequence naturally induces the following exact sequence:
\begin{equation}\label{hom_m}
	1
	\to\FinStepSolvQuo{\EtFundGrpGeom{X}}{m}
	\to\FinStepSolvQuoGeom{\EtFundGrpPi{X}}{m}
	\to\AbsGalGrp{k}
	\to 1.
\end{equation}

\subsubsection{}
Let $i$ range over $\{1,2\}$.
Let $m\in\mathbb{Z}_{\geq 1}$.
Let $X_{i}$ be a smooth curve over $k$.
We write $\UnivCov{X_{i}}^{m}\to X_{i}$ for the maximal geometrically $m$-step solvable pro-$\Sigma$ Galois covering of $X_{i}$, which is a scheme over $\overline{k}$.
For this, we introduce the following non-standard notation for isomorphism sets:
\medskip
\begin{itemize}
	\item
	      We denote by
	      \begin{equation*}
		      \Isom_{\overline{k}/k}\bigl(\UnivCov{X_{1}}^{m}/X_{1}, \UnivCov{X_{2}}^{m}/X_{2}\bigr)
	      \end{equation*}
	      the set of all pairs
	      \begin{equation*}
		      \left\{(\tilde{\mathrm{pr}_{G}},\mathrm{pr}_{G})\in
		      \Isom_{\overline{k}}(\UnivCov{X_{1}}^{m}, \UnivCov{X_{2}}^{m})\times
		      \Isom_{k}(X_{1}, X_{2})
		      \,\middle|\,
		      \vcenter{
		      \xymatrix{
		      \UnivCov{X_{1}}^{m} \ar[r]^{\tilde{\mathrm{pr}_{G}}} \ar[d] & \UnivCov{X_{2}}^{m} \ar[d] \\
		      X_{1} \ar[r]^{\mathrm{pr}_{G}} & X_{2}
		      }}
		      \text{ commutes.}
		      \right\}.
	      \end{equation*}
	\item
	      Let $n\in\mathbb{Z}_{\geq 0}$.
	      We denote by
	      \begin{equation*}
		      \Isom_{\AbsGalGrp{k}}^{(m+n)}(\FinStepSolvQuoGeom{\EtFundGrpPi{X_{1}}}{m}, \FinStepSolvQuoGeom{\EtFundGrpPi{X_{2}}}{m})
	      \end{equation*}
	      the image of the natural map
	      \begin{equation*}
		      \Isom_{\AbsGalGrp{k}}\bigl(\FinStepSolvQuoGeom{\EtFundGrpPi{X_{1}}}{m+n}, \FinStepSolvQuoGeom{\EtFundGrpPi{X_{2}}}{m+n}\bigr)
		      \to
		      \Isom_{\AbsGalGrp{k}}\bigl(\FinStepSolvQuoGeom{\EtFundGrpPi{X_{1}}}{m}, \FinStepSolvQuoGeom{\EtFundGrpPi{X_{2}}}{m}\bigr).
	      \end{equation*}
	      We also define
	      \begin{equation*}
		      \Isom_{\AbsGalGrp{k}}^{\Out,(m+n)}(\FinStepSolvQuoGeom{\EtFundGrpPi{X_{1}}}{m}, \FinStepSolvQuoGeom{\EtFundGrpPi{X_{2}}}{m})\coloneq \Isom_{\AbsGalGrp{k}}^{(m+n)}(\FinStepSolvQuoGeom{\EtFundGrpPi{X_{1}}}{m}, \FinStepSolvQuoGeom{\EtFundGrpPi{X_{2}}}{m})/\Inn(\FinStepSolvQuo{\EtFundGrpGeom{X_{2}}}{m}),
	      \end{equation*}
	      where $\Inn(\FinStepSolvQuo{\EtFundGrpGeom{X_{2}}}{m})$ denotes the subgroup of $\Aut_{\AbsGalGrp{k}}(\FinStepSolvQuoGeom{\EtFundGrpPi{X_{2}}}{m})$ consisting of inner automorphisms induced by conjugation by elements of $\FinStepSolvQuo{\EtFundGrpGeom{X_{2}}}{m}$.
\end{itemize}
\medskip
With the above notation, S.~Mochizuki proved the following result, which is called the \textit{$m$-step solvable Grothendieck conjecture for hyperbolic curves}:

\begin{theorem*}[{\cite[Theorem~18.1]{MR1720187}}]
	Assume $\Sigma=\{p\}$.
	Let $i$ range over $\{1,2\}$.
	Let $m\in\mathbb{Z}_{\geq 2}$.
	Let $k$ be a sub-$p$-adic field with algebraic closure $\overline{k}$, and let $X_{i}$ be a smooth curve over $k$.
	Assume that at least one of $X_{1}$ and $X_{2}$ is hyperbolic.
	Then the natural map
	\begin{equation}\label{m-step-morphism-mochi}
		\Isom_{\overline{k}/k}\bigl(\UnivCov{X_{1}}^{m}/X_{1}, \UnivCov{X_{2}}^{m}/X_{2}\bigr)
		\to
		\Isom_{\AbsGalGrp{k}}^{(m+3)}\bigl(\FinStepSolvQuoGeom{\EtFundGrpPi{X_{1}}}{m}, \FinStepSolvQuoGeom{\EtFundGrpPi{X_{2}}}{m}\bigr)
	\end{equation}
	is surjective.
\end{theorem*}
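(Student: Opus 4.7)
This is Mochizuki's theorem from~\cite{MR1720187}, so a genuine proof sketch must follow the outline of that paper; what follows is the strategic plan rather than a complete argument. The idea is to bootstrap from the full pro-$p$ Grothendieck conjecture for hyperbolic curves over sub-$p$-adic fields (also proved in~\cite{MR1720187}) down to the finite-step solvable case, tracking how many layers of the derived series are needed to recover the cuspidal and Hodge-theoretic data.

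First, reduce to the affine hyperbolic case, since the smooth compactification and the set of cusps can be recovered group-theoretically once cuspidal decomposition groups are identified. Given a $\AbsGalGrp{k}$-isomorphism at the $(m+3)$-step solvable level, the goal is to construct an isomorphism $\tilde{\phi}\colon\UnivCov{X_{1}}^{m}\to\UnivCov{X_{2}}^{m}$ lying over a $k$-isomorphism $\phi\colon X_{1}\to X_{2}$, and to show that the pair $(\tilde{\phi},\phi)$ induces the prescribed group-theoretic isomorphism.

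The core analytic input is $p$-adic Hodge theory for the abelianization $\Abelian{\EtFundGrpGeom{X_{i}}}$, which via the homotopy exact sequence~\eqref{hom_m} is a Hodge--Tate $\AbsGalGrp{k}$-representation; the cuspidal inertia subgroups are characterized inside it by weight-$0$ behaviour, which costs one derived layer. To spread this identification coherently across all finite étale covers of $X_{i}$ corresponding to open subgroups of the abelianization, one uses a profinite Fox-calculus description of the second graded piece of the derived series (cf.~Proposition~\ref{bLtheory}) together with the Galois-equivariance of the cup product on $\mathrm{H}^{1}_{\et}$; this costs a second derived layer. A third derived layer is then consumed in upgrading from an isomorphism of outer Galois extensions to an honest isomorphism of universal covers via a choice of compatible base points and Galois descent.

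Once cuspidal decomposition groups, the $p$-adic Tate module of the Jacobian, and the Galois actions on them are all matched under the given $(m+3)$-step isomorphism, one concludes by the sub-$p$-adic Uchida-type reconstruction of function fields that there exists a $k$-isomorphism $\phi\colon X_{1}\xrightarrow{\sim}X_{2}$, and by construction $\phi$ lifts to $\tilde{\phi}$ on the $m$-step solvable universal covers. The main obstacle will be the Hodge--Tate step: one must verify that the weight filtration on $\FinStepSolvQuo{\EtFundGrpGeom{X_{i}}}{1}\otimes_{\mathbb{Z}_{p}}\mathbb{C}_{p}$ is already determined by the finite-depth quotient $\FinStepSolvQuoGeom{\EtFundGrpPi{X_{i}}}{m+3}$ as a $\AbsGalGrp{k}$-group. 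This is the technical heart of Mochizuki's Faltings-type argument and is precisely what forces the explicit bound ``$+3$''; any attempt to sharpen it would require a finer Hodge-theoretic input, and conversely any loosening would make the cuspidal-reconstruction step break down.
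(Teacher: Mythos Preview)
The paper does not prove this statement at all: it is stated as an unnumbered \texttt{theorem*} environment explicitly attributed to~\cite[Theorem~18.1]{MR1720187} and is used as a black box in the proof of Theorem~\ref{cor:relative_anabelian}. (The introduction even remarks that the surjectivity formulation~\eqref{m-step-morphism-mochi} follows from Mochizuki's existence statement ``with a little additional argument'', but that argument is not spelled out either.) So there is nothing in the paper to compare your proposal against.

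As for the proposal itself, it is an honest acknowledgement that this is Mochizuki's theorem together with a plausible-sounding narrative about $p$-adic Hodge theory and cuspidal reconstruction, but it is not a proof and you say as much. Two cautions if you intend to flesh it out. First, your accounting of the ``$+3$'' is speculative: the actual distribution of derived layers in~\cite{MR1720187} is tied to specific technical steps (e.g.\ recovering decomposition groups via weights, and then the relative Grothendieck conjecture machinery), and your attribution of one layer each to ``cuspidal identification'', ``Fox calculus spreading'', and ``base-point upgrade'' does not match the structure of Mochizuki's argument. Second, the invocation of Proposition~\ref{bLtheory} here is misplaced: the Blanchfield--Lyndon sequence in this paper is used for the free pro-$\Sigma$ centralizer computation in Section~\ref{free_section}, not for anything in Mochizuki's proof. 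If the goal is simply to record the statement for use in Theorem~\ref{cor:relative_anabelian}, the correct move is to cite~\cite[Theorem~18.1]{MR1720187} and not attempt a sketch.
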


\subsubsection{}
The following is the second main theorem of this paper:

\begin{theorem}\label{cor:relative_anabelian}
	We keep the notation and assumptions as in the above theorem.
	Then the natural map~\eqref{m-step-morphism-mochi} is bijective.
\end{theorem}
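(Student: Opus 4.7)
The surjectivity of~\eqref{m-step-morphism-mochi} is exactly the reformulated form of Mochizuki's theorem recalled just above, so the substance of the statement is injectivity. My plan is to package injectivity via a torsor argument driven by Theorem~\ref{mainthm_center_free_fund}, and then isolate the remaining analytic content as an injectivity statement for an underlying map of $k$-isomorphism sets.

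First I would organize everything through the commutative square
\begin{equation*}
	\vcenter{\xymatrix@R=18pt@C=8pt{
		\Isom_{\overline{k}/k}(\UnivCov{X_{1}}^{m}/X_{1},\UnivCov{X_{2}}^{m}/X_{2}) \ar[r] \ar[d] & \Isom_{\AbsGalGrp{k}}^{(m+3)}(\FinStepSolvQuoGeom{\EtFundGrpPi{X_{1}}}{m}, \FinStepSolvQuoGeom{\EtFundGrpPi{X_{2}}}{m}) \ar[d] \\
		\Isom_{k}(X_{1},X_{2}) \ar[r] & \Isom_{\AbsGalGrp{k}}^{\Out,(m+3)}(\FinStepSolvQuoGeom{\EtFundGrpPi{X_{1}}}{m}, \FinStepSolvQuoGeom{\EtFundGrpPi{X_{2}}}{m}).
	}}
\end{equation*}
The left vertical map, $(\tilde{\phi},\phi)\mapsto\phi$, has fibers that are $\FinStepSolvQuo{\EtFundGrpGeom{X_{2}}}{m}$-torsors (via post-composition of $\tilde{\phi}$ with deck transformations of $\UnivCov{X_{2}}^{m}/X_{2}$). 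The right vertical map is the quotient by the $\Inn(\FinStepSolvQuo{\EtFundGrpGeom{X_{2}}}{m})$-action. Theorem~\ref{mainthm_center_free_fund} plays its first role here: this $\Inn$-action is free on the set of isomorphisms, because the stabilizer of any $\psi$ consists of those $g\in\FinStepSolvQuo{\EtFundGrpGeom{X_{2}}}{m}$ centralizing $\psi(\FinStepSolvQuoGeom{\EtFundGrpPi{X_{1}}}{m})=\FinStepSolvQuoGeom{\EtFundGrpPi{X_{2}}}{m}$, hence centralizing $\FinStepSolvQuo{\EtFundGrpGeom{X_{2}}}{m}$, so $g\in\CenterSubgrp{\FinStepSolvQuo{\EtFundGrpGeom{X_{2}}}{m}}=1$ by center-freeness. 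Both vertical maps are thus $\FinStepSolvQuo{\EtFundGrpGeom{X_{2}}}{m}$-torsors, the top row is tautologically equivariant for this action, and bijectivity of the top row reduces to bijectivity of the bottom row.

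Since Mochizuki's theorem already gives surjectivity of the bottom row, the remaining problem is its injectivity, i.e., that any $\alpha\in\Aut_{k}(X_{1})$ whose image in $\Isom_{\AbsGalGrp{k}}^{\Out,(m+3)}(\FinStepSolvQuoGeom{\EtFundGrpPi{X_{1}}}{m}, \FinStepSolvQuoGeom{\EtFundGrpPi{X_{1}}}{m})$ is trivial must equal $\mathrm{id}_{X_{1}}$. I would deduce this from Mochizuki's original \emph{full} pro-$p$ Grothendieck conjecture, which in the sub-$p$-adic setting with $\Sigma=\{p\}$ yields the bijection $\Aut_{k}(X_{1})\xrightarrow{\sim}\Out_{\AbsGalGrp{k}}(\EtFundGrp{X_{1,\overline{k}}}^{\Sigma})$. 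The composition of this bijection with the natural reduction $\Out_{\AbsGalGrp{k}}(\EtFundGrp{X_{1,\overline{k}}}^{\Sigma})\to\Out_{\AbsGalGrp{k}}(\FinStepSolvQuo{\EtFundGrpGeom{X_{1}}}{m})$ recovers the map we wish to show is injective, so it suffices to show that no nontrivial element in the image of $\Aut_{k}(X_{1})$ is killed by this reduction. For this I would invoke the ab-faithfulness of $\EtFundGrp{X_{1,\overline{k}}}^{\Sigma}$ (Proposition~\ref{prop_fund_abs}) applied to the sections $\DerivSeries{\EtFundGrpGeom{X_{1}}}{j}/\DerivSeries{\EtFundGrpGeom{X_{1}}}{j+1}$ of the derived filtration, combined with Mochizuki's $(m+3)$-step surjectivity at higher levels in order to exploit the \emph{geometric} origin of $\alpha$.

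The main obstacle is precisely this last step: ruling out nontrivial geometric outer automorphisms that become trivial after passing to the $m$-step solvable quotient; since $\EtFundGrp{X_{1,\overline{k}}}^{\Sigma}$ is non-solvable, purely group-theoretic derived-series tools alone do not suffice, and one must leverage the ambient $k$-geometric structure. Once $\alpha=\mathrm{id}_{X_{1}}$ is established, the remaining assertion $\tilde{\alpha}=\mathrm{id}$ is a second application of Theorem~\ref{mainthm_center_free_fund}: given $\alpha=\mathrm{id}$, the element $\tilde{\alpha}$ is a deck transformation in $\FinStepSolvQuo{\EtFundGrpGeom{X_{1}}}{m}$ inducing the trivial inner automorphism of $\FinStepSolvQuoGeom{\EtFundGrpPi{X_{1}}}{m}$, so $\tilde{\alpha}\in\FinStepSolvQuo{\EtFundGrpGeom{X_{1}}}{m}\cap\CenterSubgrp{\FinStepSolvQuoGeom{\EtFundGrpPi{X_{1}}}{m}}\subseteq\CenterSubgrp{\FinStepSolvQuo{\EtFundGrpGeom{X_{1}}}{m}}=1$, closing the argument.
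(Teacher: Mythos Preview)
Your overall architecture matches the paper's: the same commutative square, the same use of Theorem~\ref{mainthm_center_free_fund} to make the left vertical arrow a bijection (the paper phrases this via the exact sequence $1\to\Aut_{X_{2,\overline{k}}}(\UnivCov{X_{2}}^{m})\to\Isom_{\overline{k}/k}(\cdots)\to\Isom_k(X_1,X_2)\to 1$ and the identification $\Aut_{X_{2,\overline{k}}}(\UnivCov{X_{2}}^{m})\cong\FinStepSolvQuo{\EtFundGrpGeom{X_{2}}}{m}\cong\Inn(\FinStepSolvQuo{\EtFundGrpGeom{X_{2}}}{m})$, which is exactly your torsor statement), and the same reduction to bijectivity of the bottom row, with surjectivity coming from Mochizuki.

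The divergence is precisely at the step you flag as the obstacle: injectivity of
\[
\Isom_k(X_1,X_2)\longrightarrow \Isom_{\AbsGalGrp{k}}^{\Out,(m+3)}\bigl(\FinStepSolvQuoGeom{\EtFundGrpPi{X_{1}}}{m},\FinStepSolvQuoGeom{\EtFundGrpPi{X_{2}}}{m}\bigr).
\]
The paper does \emph{not} attempt to derive this from ab-faithfulness or from the full pro-$p$ Grothendieck conjecture as you sketch. Instead it invokes an external result, \cite[Lemma~4.9]{MR4745885}, which directly gives this injectivity (the paper remarks that the cited lemma is stated for $k$ finitely generated over $\mathbb{Q}$ but that its proof goes through for sub-$p$-adic $k$). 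Your proposed route through $\Aut_k(X_1)\xrightarrow{\sim}\Out_{\AbsGalGrp{k}}(\EtFundGrpPi{X_1})$ followed by showing that reduction to the $m$-step quotient is injective on geometric classes is, as you yourself note, not completed; the ab-faithfulness of $\EtFundGrpGeom{X_1}$ controls finite quotients acting on abelianizations of open subgroups, but does not by itself prevent a nontrivial outer automorphism from becoming inner modulo $\DerivSeries{\EtFundGrpGeom{X_1}}{m}$. So your proof proposal has a genuine gap at exactly the point you identify, and the paper closes it by citation rather than by the internal argument you were reaching for.

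A minor omission: the paper also begins by disposing of the case where one of the $X_i$ is not hyperbolic (using that $\FinStepSolvQuo{\EtFundGrpGeom{X_i}}{m}$ is abelian iff $X_i$ is not hyperbolic, so hyperbolicity is read off from the group data and both curves may be assumed hyperbolic). You should add this reduction before applying Theorem~\ref{mainthm_center_free_fund}.
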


\begin{proof}
	If $\Isom_{\AbsGalGrp{k}}^{(m+3)}\bigl(\FinStepSolvQuoGeom{\EtFundGrpPi{X_{1}}}{m}, \FinStepSolvQuoGeom{\EtFundGrpPi{X_{2}}}{m}\bigr)=\emptyset$, then the statement is tautological.
	Hence we may assume that $\Isom_{\AbsGalGrp{k}}^{(m+3)}\bigl(\FinStepSolvQuoGeom{\EtFundGrpPi{X_{1}}}{m}, \FinStepSolvQuoGeom{\EtFundGrpPi{X_{2}}}{m}\bigr)\neq \emptyset$.
	First, by Theorem~\ref{mainthm_center_free_fund}, the group $\FinStepSolvQuo{\EtFundGrpGeom{X_{1}}}{m}$ is nontrivial and center-free if $X_{1}$ is hyperbolic.
	If $X_{1}$ is not hyperbolic, then $\FinStepSolvQuo{\EtFundGrpGeom{X_{1}}}{m}$ is abelian.
	Therefore, we can determine whether $X_{1}$ is hyperbolic from $\FinStepSolvQuo{\EtFundGrpGeom{X_{1}}}{m}$.
	Hence we may assume that $X_{1}$ and $X_{2}$ are both hyperbolic.
	Next, by definition, there is an exact sequence:
	\begin{equation*}
		1\to \Inn(\FinStepSolvQuo{\EtFundGrpGeom{X_{2}}}{m})\to
		\Isom_{\AbsGalGrp{k}}^{(m+3)}\bigl(\FinStepSolvQuoGeom{\EtFundGrpPi{X_{1}}}{m}, \FinStepSolvQuoGeom{\EtFundGrpPi{X_{2}}}{m}\bigr)\to
		\Isom_{\AbsGalGrp{k}}^{\Out,(m+3)}\bigl(\FinStepSolvQuoGeom{\EtFundGrpPi{X_{1}}}{m}, \FinStepSolvQuoGeom{\EtFundGrpPi{X_{2}}}{m}\bigr)\to 1.
	\end{equation*}
	On the geometric side, we have an exact sequence:
	\begin{equation*}
		1\to
		\Aut_{X_{2,\overline{k}}}\bigl(\UnivCov{X_{2}}^{m}\bigr)\to
		\Isom_{\overline{k}/k}\bigl(\UnivCov{X_{1}}^{m}/X_{1}, \UnivCov{X_{2}}^{m}/X_{2}\bigr)\to
		\Isom_k(X_{1}, X_{2})\to 1.
	\end{equation*}
	Therefore, we obtain a commutative diagram with exact rows:
	\begin{equation*}
		\xymatrix@C=16pt{
		1\ar[r]&
		\Aut_{X_{2,\overline{k}}}\bigl(\UnivCov{X_{2}}^{m}\bigr)\ar@{->>}[d]\ar[r]&
		\Isom_{\overline{k}/k}\bigl(\UnivCov{X_{1}}^{m}/X_{1}, \UnivCov{X_{2}}^{m}/X_{2}\bigr)\ar[d]\ar[r]&
		\Isom_{k}(X_{1}, X_{2})\ar[d]\ar[r]&1\\
		1\ar[r]&
		\Inn(\FinStepSolvQuo{\EtFundGrpGeom{X_{2}}}{m})\ar[r]&
		\Isom_{\AbsGalGrp{k}}^{(m+3)}\bigl(\FinStepSolvQuoGeom{\EtFundGrpPi{X_{1}}}{m}, \FinStepSolvQuoGeom{\EtFundGrpPi{X_{2}}}{m}\bigr)\ar[r]&
		\Isom_{\AbsGalGrp{k}}^{\Out,(m+3)}\bigl(\FinStepSolvQuoGeom{\EtFundGrpPi{X_{1}}}{m}, \FinStepSolvQuoGeom{\EtFundGrpPi{X_{2}}}{m}\bigr)\ar[r]&1.
		}
	\end{equation*}
	By the definition of $\UnivCov{X_{2}}^{m}\to X_{2}$, we have a canonical identification
	\begin{equation*}
		\Aut_{X_{2,\overline{k}}}\bigl(\UnivCov{X_{2}}^{m}\bigr)\, \cong\, \FinStepSolvQuo{\EtFundGrpGeom{X_{2}}}{m}.
	\end{equation*}
	Therefore, the group 
	\begin{equation*}
		\ker(\Aut_{X_{2,\overline{k}}}\bigl(\UnivCov{X_{2}}^{m}\bigr)\twoheadrightarrow \Inn(\FinStepSolvQuo{\EtFundGrpGeom{X_{2}}}{m}))=\CenterSubgrp{\FinStepSolvQuo{\EtFundGrpGeom{X_{2}}}{m}}
	\end{equation*}
	is trivial, since the group $\FinStepSolvQuo{\EtFundGrpGeom{X_{2}}}{m}$ is center-free by Theorem~\ref{mainthm_center_free_fund}.
	Hence the left-hand vertical arrow in the above commutative diagram is bijective.
	Moreover, the right-hand vertical arrow is surjective by~\cite[Theorem~18.1]{MR1720187}, and injective by~\cite[Lemma~4.9]{MR4745885}.
	(Note that~\cite[Lemma~4.9]{MR4745885} assumed that $k$ is a field finitely generated over $\mathbb{Q}$.
	However, the proof can be applied to the case where $k$ is a sub-$p$-adic field.)
	Thus, by the diagram chase, the middle vertical arrow is also bijective.
	This completes the proof.
\end{proof}

\begin{remark}
	In Theorem~\ref{cor:relative_anabelian}, we assumed that $\Sigma=\{p\}$.
	If we further assume that $m\geq 3$, this assumption can be weakened to $p\in\Sigma$.
	The author expects that the same statement should hold for such $\Sigma$ even when $m=2$.
	However, to prove this, we would need to check whether the proof of~\cite[Theorem~18.1]{MR1720187} applies in this setting as well.
	At the time of writing, the author has not attempted this modification.
\end{remark}

\section*{Acknowledgements}
The author would like to express sincere gratitude to Prof.~Akio~Tamagawa for his invaluable assistance and insightful suggestions throughout this research.

\printbibliography

\end{document}